\newtheorem{theorem}{Theorem}[section]
\newtheorem{lemma}[theorem]{Lemma}
\newtheorem{proposition}[theorem]{Proposition}
\numberwithin{equation}{section}
\def\qed{{\hfill $\square$ \bigskip}}
\def\R{{\mathbb R}}
\def\E{{{\mathbb E}\,}}
\def\N{{\mathbb N}}
\def\qed{{\hfill $\square$ \bigskip}}
\def\square{{\vcenter{\vbox{\hrule height.3pt
        \hbox{\vrule width.3pt height5pt \kern5pt
           \vrule width.3pt}
        \hrule height.3pt}}}}
\def\tlint{{- \kern-0.85em \int \kern-0.2em}}
\def\dlint{{- \kern-1.05em \int \kern-0.4em}}
\def \eref#1{\hbox{(\ref{#1})}}
\def \eref#1{\hbox{(\ref{#1})}}
\newenvironment{proof}[1][Proof]{\noindent\textit{#1.} }{\hfill \qed}
\begin{document}

\title{Limit laws for occupation times of stable processes}
\date{\empty }
\author{ David Nualart\thanks{ D. Nualart is supported by the NSF grant
DMS1208625.} \ \
and  \ Fangjun Xu\thanks{F. Xu is supported in part by the Robert Adams Fund.}\\
Department of Mathematics \\
University of Kansas \\
Lawrence, Kansas, 66045 USA}

\maketitle

\begin{abstract}
\noindent We prove two limit laws for 
functionals of one dimensional symmetric 1-stable process using the method of moments, and give a remark on Rosen's paper \cite{Rosen}.
\vskip.2cm \noindent {\it Keywords:}  $\alpha$-stable process,
limit law, local time, method of moments.

\vskip.2cm \noindent {\it Subject Classification: Primary 60F05;
Secondary 60G52.}
\end{abstract}

\section{Introduction}

Let $X=\{X(t),\, t\geq 0\}$ be a symmetric $\alpha$-stable process in $\R$. The local time $L_t(x)$ of $X$ exists and is jointly continuous in $t$ and $x$ if $\alpha>1$ (see \cite{Boylan}).  For any integrable function $f:\R\to \R$, using the scaling property of $\alpha$-stable process and the continuity of the local time, one can easily obtain the following convergence in law in the space $C([0,\infty))$, as $n$ tends to infinity,
\[
\Big( n^{\frac{1-\alpha}{\alpha}}\int^{nt}_0 f(X(s))\, ds,\; t\geq 0 \Big) \overset{\mathcal{L}}{\longrightarrow } \Big( L_t(0)\int_{\R} f(x)\, dx,\; t\geq 0 \Big)
\]

Assuming that $f$ is a bounded Borel function on $\R$ with compact support and $\int_{\R} f(x)\, dx=0$, Rosen \cite{Rosen} showed 
\[
  \Big(  n^{\frac{1-\alpha}{2\alpha}}\int_{0}^{nt} f(X(s))\, ds\,,
  \ t\ge 0\Big) \ \ \overset{\mathcal{L}}{\longrightarrow } \ \
  \Big(  \sqrt{2c\langle f,f\rangle_{\alpha-1}}\, W (L_{t}(0))\,, t\ge
  0\Big)
\]
as $n$ tends to infinity, where $W$ is a real-valued Brownian motion independent of $X$, 
\[
c=\int^{\infty}_0(p_1(0)-p_1(1/s^{1/\alpha}))\, \frac{ds}{s^{1/\alpha}}
\]
with $p_1(x)$ being the probability density function of $X(1)$,
and 
\[
\langle f,f\rangle_{\alpha-1}=-\int_{\R^2} f(x)f(y)|x-y|^{\alpha-1}\, dx\, dy.
\]

We are interested in the limit theorems for the $\alpha$-stable process when $\alpha=1$ because the local time does not exist in this case.  We will show the following two limit laws.
\begin{theorem} \label{1st} Suppose that $f$ is bounded and $\int_{\R} |x f(x)|\, dx<\infty$. Then, for any $t>0$,
\[
\frac{1}{n}\int^{e^{nt}}_0 f(X(s))\, ds \overset{\mathcal{L}}{\longrightarrow } \Big(\frac{1}{\pi}\int_{\R} f(x)\, dx\Big)\, Z(t)
\]
\end{theorem}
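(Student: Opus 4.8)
The plan is to use the method of moments. Since $f$ is bounded, $Y_n(t):=\frac1n\int_0^{e^{nt}}f(X(s))\,ds$ has finite moments of every order, so by the Fr\'echet--Shohat theorem it is enough to prove that, for each integer $m\ge1$,
\[
\E\big[Y_n(t)^m\big]\ \longrightarrow\ m!\,t^m\Big(\frac1\pi\int_{\R}f(x)\,dx\Big)^m \qquad (n\to\infty).
\]
Indeed, the right-hand side is the $m$-th moment of $\frac1\pi\big(\int_{\R}f\big)Z(t)$, and since these moments grow no faster than $C^m m!$ they satisfy Carleman's condition, so the limit law is determined by them.

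To evaluate the moments, write $T=e^{nt}$ (so $\log T=nt$) and let $p_u(x)=\frac{u}{\pi(u^2+x^2)}$ denote the transition density of $X$. Expanding the $m$-th power, using the symmetry of $\prod_i f(X(s_i))$ in $(s_1,\dots,s_m)$ and the Markov property, gives
\[
\E\Big[\Big(\int_0^{T}f(X(s))\,ds\Big)^m\Big]=m!\int_{0<s_1<\dots<s_m<T}\int_{\R^m}\prod_{i=1}^m p_{s_i-s_{i-1}}(x_i-x_{i-1})\,f(x_i)\,dx\,ds,
\]
with $s_0=0$, $x_0=0$. I would pass to the gap variables $u_i=s_i-s_{i-1}$ and, for a fixed $\eta>0$, split the $u$-region according to which $u_i$ are $\ge\eta$. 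On the piece where every $u_i\ge\eta$ I would use the elementary estimates $0\le p_u(x)\le\frac1{\pi u}$ and $|p_u(x)-\frac1{\pi u}|\le\frac{|x|}{\pi u^2}$: writing $p_{u_i}(x_i-x_{i-1})=\frac1{\pi u_i}+r_i$ and expanding the product, the leading term $\prod_i\frac1{\pi u_i}$ decouples the spatial variables and contributes
\[
\frac1{\pi^m}\Big(\int_{\R}f\Big)^m\int_{\{u_i\ge\eta,\ \sum_i u_i<T\}}\prod_{i=1}^m\frac{du_i}{u_i}=\frac{(\log T)^m}{\pi^m}\Big(\int_{\R}f\Big)^m(1+o(1)),
\]
since the last integral is sandwiched between $(\log(T/(m\eta)))^m$ and $(\log(T/\eta))^m$.

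It remains to show all other contributions are $o((\log T)^m)$. First I would split the spatial integral into the set where $|x_i|\le R$ for all $i$ and its complement. On the first set any term of the above expansion carrying a factor $r_i$ satisfies $|r_i|\le\frac{2R}{\pi u_i^2}$, which is integrable in $u_i$ over $[\eta,\infty)$, so such a term is $O_{R,\eta}((\log T)^{m-1})$; on the complement one bounds every $p_{u_i}$ by $\frac1{\pi u_i}$ and extracts the small factor $\int_{|x|>R}|f|$, so that after $T\to\infty$ only this factor (times a constant) survives (this also replaces $\int_{\R}f$ by $\int_{-R}^{R}f$ in the main term, harmlessly after $R\to\infty$). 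Second, on any $u$-piece where some gaps are $<\eta$, I would integrate out the spatial variables attached to the small gaps one at a time, using $\int p_u(x-y)|f(x)|\,dx\le\|f\|_\infty$ and the Chapman--Kolmogorov identity $\int p_u(x-y)p_v(z-x)\,dx=p_{u+v}(z-y)$ (together with $p_{u+v}(z)\le\frac1{\pi(u+v)}$) to merge neighbouring kernels; each small gap then contributes a bounded factor $\int_0^\eta du_i=\eta$ in place of a logarithm, so a piece with $k\ge1$ small gaps is $O_\eta((\log T)^{m-k})$. Combining everything, $\E[(\int_0^Tf)^m]=m!\,\pi^{-m}(\log T)^m((\int_{\R}f)^m+o(1))$, and dividing by $n^m$ with $\log T=nt$ gives the claimed moment limit, which finishes the proof.

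I expect the main obstacle to be the second group of estimates: controlling the spatial integrals against the heavy-tailed Cauchy kernel. Because $\int_{\R}|x|\,p_u(x)\,dx=\infty$, the remainder $r_i$ cannot be integrated against a neighbouring transition density, so the approximation $p_u(x)\approx\frac1{\pi u}$ is only legitimate once the spatial variables have been confined to a bounded interval. This is precisely where the hypotheses enter: $\int_{\R}|xf(x)|\,dx<\infty$ is what makes the remainders $r_i$ summable in the gap variables on the confined region (so the approximation error is only $O((\log T)^{m-1})$), while the boundedness of $f$ is what makes the Chapman--Kolmogorov cleanup of the small-gap contributions go through.
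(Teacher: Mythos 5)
Your argument is correct, but it is not the route the paper takes. The paper never touches the transition density: it first discards $\int_0^1$, writes $\frac1n\int_1^{e^{nt}}f(X(s))\,ds$ via the Fourier transform of $f$, and proves (Lemma \ref{diff}) that replacing $\widehat f(x)\mathbf{1}_{\R}$ by $\widehat f(0)\mathbf{1}_{\{|x|\le1\}}$ costs only $O(n^{-1/2})$ in $L^2$; the method of moments is then applied to the resulting model functional, whose moments reduce, via the characteristic function formula \eref{incr}, to the explicit integral of Lemma \ref{a1} --- where the factor $\int_{|x|\le1}e^{-|x|u}\,dx=2(1-e^{-u})/u$ is bounded near $u=0$, so no small-gap truncation is ever needed. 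You instead apply the method of moments directly in physical space, approximating the Cauchy kernel $p_u(x)$ by $\frac1{\pi u}$ and controlling the error with a two-stage truncation (space at level $R$, gaps at level $\eta$) plus Chapman--Kolmogorov merging; all the individual estimates you invoke are sound (in particular $|p_u(x)-\frac1{\pi u}|=\frac{x^2}{\pi u(u^2+x^2)}\le\frac{|x|}{2\pi u^2}$, and the induction $J_m(T)=O((\log T)^m)$ needed to close the small-gap bound), so the sketch does go through. Your approach is the classical Darling--Kac/Kallianpur--Robbins computation and is in some ways more elementary and slightly more general: as written it only uses $f\in L^1\cap L^\infty$, so your closing diagnosis is a little off --- the hypothesis $\int_\R|xf(x)|\,dx<\infty$ enters your proof only through the implication $f\in L^1$ (the remainders $r_i$ are controlled on the confined region by $|x_i-x_{i-1}|\le2R$ alone), whereas in the paper it is used in an essential-looking way to get $|\widehat f(x)-\widehat f(0)|\le c|x|$ in Lemma \ref{diff}. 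What the paper's Fourier route buys in exchange is a cleaner limit computation with no truncations and, more importantly, a framework (the change of variables $y_i=\sum_{j\ge i}x_j$ and the chaining of Proposition \ref{chain}) that is reused essentially verbatim for the second-order result, Theorem \ref{2nd}, where the kernel-replacement idea would be much harder to push through.
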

as $n$ tends to infinity, where $Z(t)$ is an exponential random variable with parameter $t$.

\noindent
{\bf Remark} If we use the normalizing factor $\frac{1}{\log n}$, then the limiting distribution of 
\[
\frac{1}{\log n}\int^{nt}_0 f(X(s))\, ds
\]
is independent of $t$.

\begin{theorem} \label{2nd} Suppose that $f$ is bounded, $\int_{\R} |x f(x)|\, dx<\infty$ and $\int_{\R} f(x)\, dx=0$. Then, for any $t>0$,
\[
\frac{1}{\sqrt{n}}\int^{e^{nt}}_0 f(X(s))\, ds \overset{\mathcal{L}}{\longrightarrow } \Big(\frac{1}{\pi^2}\int_{\R}|\widehat{f}(x)|^2|x|^{-1}\, dx\Big)^{-\frac{1}{2}}\, \sqrt{Z(t)}\, \eta
\]
\end{theorem}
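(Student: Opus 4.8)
The plan is to prove the one--dimensional convergence in law by the method of moments, as the title suggests. Write $I_n=\frac1{\sqrt n}\int_0^{T}f(X_s)\,ds$ with $T=e^{nt}$, set $\sigma^2=\frac1{\pi^2}\int_\R|\widehat f(x)|^2|x|^{-1}\,dx$, and let $\eta$ be a standard normal variable independent of $Z(t)$. The candidate limit $\sigma\sqrt{Z(t)}\,\eta$ is a variance mixture of Gaussians; its odd moments vanish and its even moments are
\[
\E\big[(\sigma\sqrt{Z(t)}\,\eta)^{2p}\big]=\sigma^{2p}\,\E[Z(t)^p]\,\E[\eta^{2p}]=\sigma^{2p}\,t^p\,\frac{(2p)!}{2^p}.
\]
These grow slowly enough that Carleman's condition holds, so this law is determined by its moments; it therefore suffices to show $\E[I_n^{2p}]\to\sigma^{2p}t^p(2p)!/2^p$ and $\E[I_n^{2p+1}]\to0$. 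To compute $\E[I_n^m]$ I would symmetrize over the $m!$ time--orderings and use the Markov property together with the inversion $f(x)=\frac1{2\pi}\int\widehat f(\xi)e^{i\xi x}\,d\xi$ and the characteristic function $\E[e^{i\xi X_s}]=e^{-s|\xi|}$. On the simplex $0<s_1<\cdots<s_m<T$, with $r_k=s_k-s_{k-1}$ and the partial sums $u_k:=\xi_k+\cdots+\xi_m$, the change of variables $\xi_k=u_k-u_{k+1}$ ($u_{m+1}=0$, Jacobian $1$) gives
\[
\E[I_n^m]=\frac{m!}{n^{m/2}(2\pi)^m}\int_{\R^m}\prod_{k=1}^m\widehat f(u_k-u_{k+1})\,\Phi_T(u)\,du,\qquad \Phi_T(u)=\int_{r_k>0,\ \sum_k r_k<T}\prod_{k}e^{-r_k|u_k|}\,dr.
\]

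The time factor is cleanest through its Laplace transform in $T$, namely $\int_0^\infty e^{-\lambda T}\Phi_T\,dT=\lambda^{-1}\prod_k(\lambda+|u_k|)^{-1}$, which removes the simplex coupling $\sum_k r_k<T$ and turns the frequency integral into a pure product. A Tauberian argument then reduces the large--$T$ behaviour of $\E[I_n^m]$ to the $\lambda\downarrow0$ behaviour of $\int_{\R^m}\prod_k\widehat f(u_k-u_{k+1})\prod_k(\lambda+|u_k|)^{-1}\,du$. Each factor $(\lambda+|u_k|)^{-1}$ produces one logarithm, since $\int_{|u|<1}(\lambda+|u|)^{-1}\,du\sim2\log\frac1\lambda$, precisely when $u_k$ is free to approach $0$. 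Here the hypothesis $\int f=0$ is decisive: it says $\widehat f(0)=0$, and since $\int|xf|<\infty$ makes $\widehat f$ Lipschitz we have $|\widehat f(\xi)|\le|\xi|\int|xf|$ near the origin. Hence $\widehat f(u_k-u_{k+1})$ is forced to vanish whenever $u_k$ and $u_{k+1}$ are simultaneously small, so no two consecutive frequencies may resonate, and (because $u_{m+1}=0$) neither may $u_m$. The resonating indices therefore form an independent set in the path $1,\dots,m-1$, of cardinality at most $\lfloor m/2\rfloor$.

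For $m=2p$ the unique maximal configuration is $S=\{1,3,\dots,2p-1\}$, with the odd frequencies slow and the even ones of order one. There the frequency product collapses, using $\widehat f(-\xi)=\overline{\widehat f(\xi)}$, to $\prod_{l=1}^p\widehat f(-u_{2l})\widehat f(u_{2l})=\prod_l|\widehat f(u_{2l})|^2$; each of the $p$ decoupled slow integrals contributes $2\log\frac1\lambda$, while each of the $p$ fast integrals contributes $\int_\R|\widehat f(\xi)|^2|\xi|^{-1}\,d\xi=\pi^2\sigma^2$, which is finite exactly because $\widehat f(\xi)=O(|\xi|)$ at the origin and $f\in L^1\cap L^2$ controls infinity. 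Collecting the constants yields $\E[I_n^{2p}]\sim\frac{(2p)!}{n^p(2\pi)^{2p}}(2\pi^2\sigma^2)^p(\log T)^p$, and since $\log T=nt$ this tends to $\sigma^{2p}t^p(2p)!/2^p$, matching the target; the case $p=1$ by itself identifies the constant $\sigma^2$. For odd $m$ the maximal resonance carries only $\lfloor m/2\rfloor<m/2$ logarithms, so $\E[I_n^m]=o(n^{m/2})$ and the normalized odd moment vanishes.

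The hard part will be the rigorous error analysis: showing that every non--maximal index configuration, and the corrections to replacing the slow frequencies $u_{2l-1}$ by $0$ inside the dominant region, carries strictly fewer than $p$ logarithms and hence disappears after division by $n^p$. This is a multiscale bookkeeping over the $\sim2^m$ regions in which each $|u_k|$ is either small or of order one, controlled by the two-sided bound $|\widehat f(\xi)|\le\min\{\|f\|_{1},|\xi|\int|xf|\}$ together with $(\lambda+|u_k|)^{-1}\le|u_k|^{-1}$; it must be combined with a careful justification of the Tauberian passage, exploiting that $T\mapsto\Phi_T$ is nondecreasing. This bookkeeping, rather than the identification of the constant, is the bulk of the work.
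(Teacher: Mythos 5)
Your skeleton is essentially the paper's: method of moments (Carleman is indeed satisfied by the moments $\sigma^{2p}t^p(2p)!/2^p$), Fourier inversion plus the partial-sum change of variables, and the identification of the dominant alternating configuration in which even-indexed frequencies carry $|\widehat{f}(y_{2j})|^2$ and each odd-indexed slow variable contributes one logarithm $\approx nt$. Your limiting even moments, with $\sigma^2=\frac{1}{\pi^2}\int_{\R}|\widehat{f}(x)|^2|x|^{-1}\,dx$ raised to the \emph{positive} power, agree exactly with what Step 3 of the paper's proof of Proposition \ref{2nd1} computes (the exponent $-\frac{1}{2}$ in the statement of Theorem \ref{2nd} is a sign typo, as scaling in $f$ also shows, so you are right on the constant). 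But the proposal stops exactly where the real work begins: you yourself flag the multiscale bookkeeping over the $\sim 2^m$ resonance regions, and the corrections from freezing the slow frequencies, as "the bulk of the work," and none of it is carried out. The paper disposes of precisely this difficulty with a two-step device you do not have: first a truncation removing time gaps $s_i-s_{i-1}<n^{-m}$, whose cost is $O(n^{-\frac m2-1})$ by boundedness of $f$ together with Theorem \ref{1st} (estimate \eref{eq1}); then the chaining argument of Proposition \ref{chain}, which replaces the factors $\widehat{f}(y_k-y_{k+1})$ \emph{one at a time} by $\widehat{f}(-y_{k+1})$ and pairs them into $|\widehat{f}(y_{2j})|^2$, each of the $m$ replacements costing $O(n^{-\gamma})$ for any fixed $\gamma<\frac12$. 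The trick that eliminates the region-by-region analysis is the bound $|\widehat{f}(x)-\widehat{f}(y)|\le c_{\alpha}|x-y|^{\alpha}$ with $\alpha$ chosen so small that $-\frac12+(\lfloor\frac m2\rfloor+1)m\alpha=-\gamma$: combined with the lower cutoff (which makes $\int_{n^{-m}}^{e^{nt}}u^{-1-\alpha}\,du=O(n^{m\alpha})$), every non-dominant contribution loses a fixed \emph{power} of $n$, not merely a logarithm, so no enumeration of configurations is needed; after the chain, the limit of $I^n_{m,m}$ is computed exactly via Lemmas \ref{a2} and \ref{a3}.

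There is moreover a step in your plan that, as stated, would fail: the Tauberian passage. Karamata-type theorems require monotonicity (or some Tauberian condition) of the function of $T$ being inverted, and $T\mapsto\E\big[(\int_0^T f(X_s)\,ds)^m\big]$ is not monotone, because the frequency integrand $\prod_k\widehat{f}(u_k-u_{k+1})$ is signed; the pointwise monotonicity of $\Phi_T(u)$ that you invoke does not transfer to the signed $u$-integral. To rescue the argument you would have to split off the nonnegative dominant piece $\prod_l|\widehat{f}(u_{2l})|^2$ (where Karamata does apply) and dominate everything else in absolute value by nonnegative monotone majorants, for which crude bounds of the form $g(T)\le e\,\tilde g(1/T)$ suffice -- but identifying those remainder pieces and showing each loses a logarithm is exactly the unexecuted multiscale analysis, so the two gaps are really one. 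The paper sidesteps Laplace inversion entirely by working at finite $n$ throughout: the slow time-integrals are evaluated directly ($\frac1n\int_{n^{-m}}^{e^{nt}}u^{-1}\,du\to t$) once the chain has reduced $I^n_m$ to the decoupled form $I^n_{m,m}$. In short: correct target moments, correct dominant-configuration heuristic (including the odd-moment count $\lfloor m/2\rfloor<m/2$), but the quantitative suppression of non-maximal configurations and the justification of the inversion are both missing, and these constitute the actual proof.
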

as $n$ tends to infinity, where $\widehat{f}$ is the Fourier transform of $f$ and $\eta$ is a standard normal random variable independent of $Z(t)$.

 In 1953, Kallianpur and Robbins \cite{KaRo} proved that  for any bounded and integrable function $f:\mathbb{R}^2\rightarrow \mathbb{R}$, 
\[
 \frac 1{\log n} \int_0^n f(B(s))ds  \overset{\mathcal{L}}{\longrightarrow } \Big(\frac  {1}{2\pi} \int_{\mathbb{R}^2 }f(x)\, dx\Big) Z(1)
 \]
as $n$ tends to infinity, where $B$ is a two-dimensional Brownian motion and $Z(1)$ is an exponential random variable with parameter $1$. After that, the asymptotic properties of the additive functionals of the form $\int^t_0 f(X(s))\, ds$, where $X$ is a real-valued stochastic process, received a lot of attention. The study of this problem mainly goes into two directions. One is on Markov processes and the other one on fractional Brownian motions. For general Markov processes, we refer to \cite{DaKa, bingham, PSV, KV}. For some special Markov processes, see, e.g., \cite{KaKo, biane} for Brownian motion and \cite{Rosen, PT} for $\alpha$-stable processes.  For general fractional Brownian motions, we refer to \cite{KaKos, Kon, HNX, NX1, NX2}. 

It is well known that general fractional Brownian motions are neither Markov processes nor semimartingles. So the martingale method applied  by Papanicolaou, Stroock and Varadhan in \cite{PSV} and further developed by Kipnis and Varadhan in \cite{KV} is not useful in the fractional Brownian motion case.  When proving the limit theorems for (additive) functionals of fractional Brownian motions, one often uses the method of moments. Another possible candidate is the Malliavin calculus. In \cite{NX2}, we introduced a chaining argument to obtain estimates for moments, using Fourier techniques. To the best of our knowledge, this chaining argument is brand new and very powerful for the fractional Brownian motion case. For example, it could have been applied to give another proof of Theorem 1.1 in \cite{Rosen}. 

In this paper, we will use the methodology in \cite{NX2} with some modifications to prove the second order limit law (Theorem \ref{2nd}) for the $1$-stable process and give another expression for the constant in the limiting process in Theorem 1.1 of \cite{Rosen}.

After some preliminaries in Section 2,  Section 3 is devoted to the
proof of Theorem \ref{1st}, based on the method of moments. In Section 4, we prove Theorem \ref{2nd} using the method of moments and the modified methodology in \cite{NX2}. In Section 5, we give a remark on Theorem 1.1 in \cite{Rosen}. Throughout this paper, if not mentioned otherwise, the letter $c$, 
with or without a subscript, denotes a generic positive finite
constant whose exact value is independent of $n$ and may change from
line to line. We use $\iota$ to denote $\sqrt{-1}$.

\bigskip

\section{Preliminaries}

Let $X=\{X(t),\, t\geq 0\}$ be a symmetric $\alpha$-stable process in $\R$. Then, the characteristic function of $X(t)$ is
\[
\E e^{\iota x X(t)}=e^{-t|x|^{\alpha}}
\]
for any $t\geq 0$ and $x\in\R$.  

The next lemma gives formulas for the moments of $Z(t)$ and $\sqrt{Z(t)}\,\eta$ where $Z(t)$ is an exponential random variable with parameter $t$ and $\eta$ is a standard normal random variable independent of $Z(t)$.
\begin{lemma} \label{m} For any $m\in\N$ and $t>0$, 
\[
\E[Z(t)]^m=m!\, t^m\quad\text{and}\quad \E[\sqrt{Z(t)}\,\eta]^{2m}=\frac{(2m)!\, t^m}{2^m}.
\]
\end{lemma}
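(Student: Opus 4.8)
The plan is to evaluate both moments by direct integration, using the explicit density of an exponential random variable and, for the second identity, independence together with the classical formula for the even moments of a Gaussian.

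First I would fix the normalization: $Z(t)$ has density $p_t(x)=t^{-1}e^{-x/t}$ for $x>0$, so that its mean is $t$. Then for any $m\in\N$, the substitution $u=x/t$ gives
\[
\E[Z(t)]^m=\int_0^\infty x^m\, t^{-1}e^{-x/t}\, dx = t^m\int_0^\infty u^m e^{-u}\, du = t^m\,\Gamma(m+1)=m!\, t^m,
\]
which is the first identity.

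For the second identity I would use that $\eta$ is independent of $Z(t)$, so that by Tonelli's theorem (the integrand is nonnegative) and the first part,
\[
\E[\sqrt{Z(t)}\,\eta]^{2m}=\E\big[(Z(t))^m\,\eta^{2m}\big]=\E[(Z(t))^m]\,\E[\eta^{2m}]=m!\, t^m\,\E[\eta^{2m}].
\]
It then remains to insert the standard Gaussian moment $\E[\eta^{2m}]=\tfrac{(2m)!}{2^m m!}$, which one obtains, e.g., by differentiating the moment generating function $\E[e^{s\eta}]=e^{s^2/2}$ at $s=0$ or by an elementary integration-by-parts recursion; multiplying the two factors yields $\E[\sqrt{Z(t)}\,\eta]^{2m}=\tfrac{(2m)!\, t^m}{2^m}$.

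There is essentially no real obstacle in this lemma: both formulas reduce to standard special-function computations, and the only point that deserves an explicit word is the factorization $\E[(Z(t))^m\eta^{2m}]=\E[(Z(t))^m]\,\E[\eta^{2m}]$, which is immediate from independence. (If one also wanted the odd moments of $\sqrt{Z(t)}\,\eta$, they vanish by symmetry of $\eta$, but these are not needed here.)
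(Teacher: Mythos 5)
Your proof is correct and follows essentially the same route as the paper: compute the $m$-th moment of the exponential variable (the paper cites the moment generating function where you integrate directly against the density, an immaterial difference), then factor by independence and insert the even Gaussian moment $\E[\eta^{2m}]=(2m)!/(2^m m!)=(2m-1)!!$. Your explicit remark that ``parameter $t$'' must here be read as mean $t$ (density $t^{-1}e^{-x/t}$) is a worthwhile clarification, since the stated formula $\E[Z(t)]^m=m!\,t^m$ forces that normalization.
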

\begin{proof} Using the moment generating function of the exponential distribution, we can easily obtain $\E[Z(t)]^m=m!\, t^m$.  Since $\eta$ and $Z(t)$ are independent, 
\[
\E[\sqrt{Z(t)}\,\eta]^{2m}=\E[Z(t)]^m\E[\eta]^{2m}=m!\, t^m\, (2m-1)!!=\frac{(2m)!\, t^m}{2^m}.
\]
\end{proof}

For $\alpha>1$, the local time $L_t(x)$ of $X$ exists and is jointly continuous in $t$ and $x$ (see \cite{Boylan}). The following lemma gives the expectation of $L_t(0)$.
\begin{lemma} \label{m1} For any $m\in\N$ and $t>0$, 
\[
\E(L_t(0))=\frac{\alpha}{2\pi(\alpha-1)}\, \Big(\int_{\R}e^{-|y|^{\alpha}}\,dy\Big)\, t^{1-\frac{1}{\alpha}}.
\]
\end{lemma}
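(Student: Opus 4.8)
The plan is to use the standard identity expressing the expected local time at a point as the time-integral of the transition density. For $\alpha>1$ the local time $L_t(x)$ is the occupation density of $X$, i.e. $\int_\R f(x)L_t(x)\,dx=\int_0^t f(X(s))\,ds$ for every bounded Borel $f$; taking expectations and applying Fubini gives $\int_\R f(x)\,\E L_t(x)\,dx=\int_0^t\E f(X(s))\,ds=\int_0^t\int_\R f(x)p_s(x)\,dx\,ds$, where $p_s$ is the density of $X(s)$. Hence $\E L_t(x)=\int_0^t p_s(x)\,ds$ for Lebesgue-a.e.\ $x$, and (see below) for $x=0$. At $x=0$ I would use Fourier inversion together with the characteristic function $\E e^{\iota\xi X(s)}=e^{-s|\xi|^\alpha}$ to write
\[
p_s(0)=\frac{1}{2\pi}\int_\R e^{-s|\xi|^\alpha}\,d\xi=\frac{1}{\pi}\int_0^\infty e^{-s\xi^\alpha}\,d\xi .
\]

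Next I would evaluate this one-dimensional integral by the scaling substitution $\xi=s^{-1/\alpha}y$, giving $\int_0^\infty e^{-s\xi^\alpha}\,d\xi=\tfrac12\,s^{-1/\alpha}\int_\R e^{-|y|^\alpha}\,dy$, so that $p_s(0)=\tfrac{1}{2\pi}\,s^{-1/\alpha}\int_\R e^{-|y|^\alpha}\,dy$. Integrating over $s\in[0,t]$ and using $\int_0^t s^{-1/\alpha}\,ds=\tfrac{\alpha}{\alpha-1}\,t^{1-1/\alpha}$ (finite precisely because $\alpha>1$) yields
\[
\E L_t(0)=\int_0^t p_s(0)\,ds=\frac{1}{2\pi}\Big(\int_\R e^{-|y|^\alpha}\,dy\Big)\frac{\alpha}{\alpha-1}\,t^{1-1/\alpha}
=\frac{\alpha}{2\pi(\alpha-1)}\Big(\int_\R e^{-|y|^\alpha}\,dy\Big)\,t^{1-\frac1\alpha},
\]
which is the asserted formula. (The parameter $m$ in the statement plays no role, since the right-hand side does not depend on it.)

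The computations here---Fourier inversion, the scaling substitution, and the elementary $s$-integral---are all routine, as is the Fubini step. The only point deserving care is the identification of the a.e.-defined function $x\mapsto\E L_t(x)$ with its value at the specific point $x=0$; I expect this to be the main (though well-known) obstacle. It can be handled by invoking the joint continuity of $(t,x)\mapsto L_t(x)$ for $\alpha>1$ from \cite{Boylan} together with the continuity of $x\mapsto\int_0^t p_s(x)\,ds$ (which follows by dominated convergence, using $p_s(x)\le p_s(0)$ and $\int_0^t p_s(0)\,ds<\infty$), so that two continuous functions agreeing a.e.\ agree everywhere, in particular at $0$. Alternatively one may bypass it entirely by writing $L_t(0)=\lim_{\eps\to0}\tfrac{1}{2\eps}\int_0^t\mathbf 1_{\{|X(s)|<\eps\}}\,ds$, taking expectations, and passing to the limit via $\tfrac{1}{2\eps}\P(|X(s)|<\eps)\to p_s(0)$.
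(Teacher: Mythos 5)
Your proof is correct and follows essentially the same route as the paper: both reduce $\E L_t(0)$ to $\frac{1}{2\pi}\int_0^t\int_\R e^{-s|x|^\alpha}\,dx\,ds$ via the Fourier/characteristic-function representation and then evaluate by the scaling substitution. The only difference is that you spell out the justification for identifying the a.e.-defined occupation density with its value at $x=0$, a step the paper takes for granted.
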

\begin{proof} Using the Fourier transform,
\begin{align*}
\E(L_t(0))
&=\frac{1}{2\pi}\E\Big(\int^t_0\int_{\R} e^{\iota x X(s)}\,dx\, ds\Big)\\
&=\frac{1}{2\pi}\int^t_0\int_{\R} e^{-|x|^{\alpha} s}\,dx\, ds\\
&=\frac{1}{2\pi}\Big(\int_{\R}e^{-|y|^{\alpha}}\,dy\Big)\Big(\int^t_0 s^{-\frac{1}{\alpha}}\, ds\Big)\\
&=\frac{\alpha}{2\pi(\alpha-1)}\Big(\int_{\R}e^{-|y|^{\alpha}}\,dy\Big)\, t^{1-\frac{1}{\alpha}}.
\end{align*}
\end{proof}

Using the independent increments property of $\alpha$-stable processes, we can easily obtain the following useful formula. For any $m\in\N$, 
\begin{align} \label{incr}
\E\big[e^{\iota \sum\limits^{m}_{i=1} x_i X(s_i)}\big]=e^{-\sum\limits^m_{i=1}|\sum\limits^m_{j=i} x_j|^{\alpha}(s_i-s_{i-1})},
\end{align}
where $0=s_0<s_1<\cdots<s_m<\infty$ and $x_1,\dots, x_m\in\R$.

\bigskip

\section{Proof of Theorem \ref{1st}}

In this section we will prove Theorem \ref{1st}. Since $f$ is bounded, it suffices to show the limit law for 
\[
\frac{1}{n}\int^{e^{nt}}_1 f(X(s))\, ds.
\]
That is,
\begin{equation}
\frac{1}{n}\int^{e^{nt}}_1 f(X(s))\, ds \overset{\mathcal{L}}{\longrightarrow } \Big(\frac{1}{\pi}\int_{\R} f(x)\, dx\Big)\, Z(t) \label{1st1}
\end{equation}
as $n$ tends to infinity.

For any $t>0$, using the Fourier transform, we can write
\[
\frac{1}{n}\int^{e^{nt}}_1 f(X(s))\, ds=\frac{1}{2\pi n}\int^{e^{nt}}_1\int_{\R} \widehat{f}(x)\, e^{\iota x X(s)}\, dx\, ds.
\]

\begin{lemma} \label{diff} The difference of 
\[
\frac{1}{n}\int^{e^{nt}}_1\int_{\R} \widehat{f}(x)\, e^{\iota x X(s)}\, dx\, ds
\]
and
\[
\frac{1}{n}\int^{e^{nt}}_1\int_{|x|\leq 1} \widehat{f}(0)\, e^{\iota x X(s)}\, dx\, ds
\]
converges to zero in $L^2$ as $n$ tends to infinity.
\end{lemma}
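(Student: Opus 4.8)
\medskip\noindent The plan is to decompose the difference as $D_n=A_n+B_n$, where
\[
A_n=\frac1n\int_1^{e^{nt}}\!\!\int_{|x|>1}\widehat f(x)\,e^{\iota xX(s)}\,dx\,ds,
\qquad
B_n=\frac1n\int_1^{e^{nt}}\!\!\int_{|x|\le1}\!\big(\widehat f(x)-\widehat f(0)\big)\,e^{\iota xX(s)}\,dx\,ds,
\]
and to prove $\E|A_n|^2\to0$ and $\E|B_n|^2\to0$ as $n\to\infty$, which clearly suffices. The term $B_n$ is unproblematic to write down, since $(\widehat f-\widehat f(0))\mathbf{1}_{\{|x|\le1\}}$ is bounded with compact support; $A_n$ should be read as $D_n-B_n$, and its $L^2$-norm computed by first truncating the $x$-integral to $\{1<|x|<M\}$ and letting $M\to\infty$, the estimates below being uniform in $M$ and the convergence holding in $L^2(\P)$ because $f$ bounded and integrable forces $\widehat f\in L^2$. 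Besides $\widehat f\in L^2$, I will use that $\widehat f$ is bounded and $C^1$ with $|\widehat f'|\le\int_{\R}|xf(x)|\,dx=:c_0$, hence $|\widehat f(x)-\widehat f(0)|\le c_0|x|$.

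For $\E|A_n|^2$ I would expand the square, apply Fubini, and evaluate the characteristic function from the $m=2$ instance of \eqref{incr}, namely $\E\big[e^{\iota xX(s)-\iota yX(r)}\big]=e^{-|x-y|s}\,e^{-|y|(r-s)}$ for $0<s\le r$. Taking absolute values and using the symmetry of the resulting integral under exchanging the pairs $(x,s)$ and $(y,r)$,
\[
\E|A_n|^2\le\frac{2}{n^2}\int_1^{e^{nt}}\!\!\int_s^{e^{nt}}\!\!\int_{|x|>1}\!\int_{|y|>1}|\widehat f(x)|\,|\widehat f(y)|\,e^{-|x-y|s}\,e^{-|y|(r-s)}\,dx\,dy\,dr\,ds.
\]
The decisive step is to do the $r$-integral first: $\int_s^{e^{nt}}e^{-|y|(r-s)}\,dr\le|y|^{-1}\le1$ on $\{|y|>1\}$, so $r$ is removed at no cost --- this is exactly where the high-frequency truncation is used. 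In the remaining $(x,y)$-integral I would bound $|\widehat f(x)|\,|\widehat f(y)|\le\tfrac{1}{2}\big(|\widehat f(x)|^2+|\widehat f(y)|^2\big)$, enlarge the domain to $\R^2$, and integrate out one variable via $\int_{\R}e^{-|x-y|s}\,dy=2/s$; by symmetry this is at most $\frac{2}{s}\|\widehat f\|_{L^2}^2$. Therefore $\E|A_n|^2\le\frac{4\|\widehat f\|_{L^2}^2}{n^2}\int_1^{e^{nt}}\frac{ds}{s}=\frac{4\|\widehat f\|_{L^2}^2\,t}{n}\to0$, where it is precisely the lower cutoff $s\ge1$ that keeps $\int_1^{e^{nt}}s^{-1}\,ds=nt$ finite.

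For $\E|B_n|^2$ I would run the same computation with $\psi(x):=\big(\widehat f(x)-\widehat f(0)\big)\mathbf{1}_{\{|x|\le1\}}$ replacing $\widehat f\,\mathbf{1}_{\{|x|>1\}}$, reaching
\[
\E|B_n|^2\le\frac{2}{n^2}\int_1^{e^{nt}}\!\!\int_s^{e^{nt}}\!\!\int_{|x|\le1}\!\int_{|y|\le1}|\psi(x)|\,|\psi(y)|\,e^{-|x-y|s}\,e^{-|y|(r-s)}\,dx\,dy\,dr\,ds.
\]
Integrating in $r$ now produces the factor $|y|^{-1}$, unbounded near the origin, but this is absorbed using $|\psi(y)|\le c_0|y|$; then $\int_{|y|\le1}e^{-|x-y|s}\,dy\le2/s$ leaves $\E|B_n|^2\le\frac{4c_0\|\psi\|_{L^1}}{n^2}\int_1^{e^{nt}}\frac{ds}{s}=\frac{4c_0\|\psi\|_{L^1}\,t}{n}\to0$. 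Combining, $\E|D_n|^2\le2\E|A_n|^2+2\E|B_n|^2\to0$.

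The one genuine obstacle is that the hypotheses give no decay rate for $\widehat f$ at infinity --- only $f\in L^1$, hence Riemann--Lebesgue --- so $\widehat f$ need not be integrable away from the origin. This is why $A_n$ has to be controlled through the $L^2$/Plancherel estimate above (using $f\in L^2$) rather than a crude $L^1$ bound, and why the $r$-integration must precede the $(x,y)$-integration; dually, the singularity $|y|^{-1}$ that this $r$-integration produces in $B_n$ is compensated exactly by the linear vanishing of $\widehat f(\cdot)-\widehat f(0)$ at $0$, which is the point where $\int_{\R}|xf(x)|\,dx<\infty$ is really needed.
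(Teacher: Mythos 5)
Your proof is correct and follows essentially the same route as the paper: the same splitting into the high-frequency part and the low-frequency remainder $\widehat f(x)-\widehat f(0)$, the same second-moment computation via the joint characteristic function with the later time variable integrated first to produce the $|y|^{-1}$ factor, and the same two key inputs (square-integrability of $\widehat f$ for the tail, and $|\widehat f(x)-\widehat f(0)|\le c|x|$ near the origin from $\int_{\R}|xf(x)|\,dx<\infty$). Your AM--GM/Plancherel step is just a slightly more explicit justification of the bound $\int_{|y|\ge 1}|\widehat f(y)|\,|y|^{-1}\,dy<\infty$ that the paper uses implicitly.
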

\begin{proof} We first show that
\[
F_{n,1}:=\frac{1}{n}\int^{e^{nt}}_1\int_{|x|\geq 1} \widehat{f}(x)\, e^{\iota x X(s)}\, dx\, ds
\]
converges to zero in $L^2$ as $n$ tends to infinity.  This follows easily from the following argument.
\begin{align*}
\E[F_{n,1}]^2
&=\frac{2}{n^2} \int^{e^{nt}}_1\int^{s_2}_1\int_{|x_1|\geq 1}\int_{|x_2|\geq 1} \widehat{f}(x_1)\widehat{f}(x_2)\, e^{-|x_2|(s_2-s_1)-|x_2+x_1|s_1}\, dx\, ds\\
&=\frac{2}{n^2} \int^{e^{nt}}_1\int^{s_2}_1\int_{|y_1-y_2|\geq 1}\int_{|y_2|\geq 1} \widehat{f}(y_1-y_2)\widehat{f}(y_2)\,e^{-|y_2|(s_2-s_1)-|y_1|s_1}\, dy\, ds\\
&\leq \frac{c_1}{n^2} \int^{e^{nt}}_1\int_{|y_1-y_2|\geq 1}\int_{|y_2|\geq 1} |\widehat{f}(y_2)||y_2|^{-1}\, e^{-|y_1|s_1}\, dy\, ds_1\\
&\leq \frac{c_2}{n^2} \int^{e^{nt}}_1\int_{\R} e^{-|y_1|s_1}\, dy_1\, ds_1\\
&\leq c_3\frac{t}{n},
\end{align*}
where in the first inequality we used that $\widehat{f}$ is bounded.
We next show that
\[
F_{n,2}:=\frac{1}{n}\int^{e^{nt}}_1\int_{|x|\leq 1} (\widehat{f}(x)-\widehat{f}(0))\, e^{\iota x X(s)}\, dx\, ds
\]
converges to zero in $L^2$ as $n$ tends to infinity.  

Since $|\widehat{f}(x)-\widehat{f}(0)|<c_4\, |x|$ for all $x\in\R$,
\begin{align*}
\E[F_{n,2}]^2
&\leq \frac{c_5}{n^2} \int^{e^{nt}}_1\int^{s_2}_1\int_{|x_1|\leq 1}\int_{|x_2|\leq 1} |x_1||x_2|\, e^{-|x_2|(s_2-s_1)-|x_2+x_1|s_1}\, dx\, ds\\
&\leq \frac{c_5}{n^2} \int^{e^{nt}}_1\int^{s_2}_1\int_{|y_1-y_2|\leq 1}\int_{|y_2|\leq 1} |y_2|\, e^{-|y_2|(s_2-s_1)-|y_1|s_1}\, dy\, ds\\
&\leq \frac{c_6}{n^2} \int^{e^{nt}}_1\int_{|y_1|\leq 2} e^{-|y_1|s_1}\, dy_1\, ds_1\\
&\leq  c_7\, \frac{t}{n}. 
\end{align*}
Combining these two estimates gives the desired result.
\end{proof}

With the help of Lemma \ref{diff}, to prove Theorem \ref{1st}, we only need to show the following result.
\begin{proposition} Suppose that $f$ is bounded and $\int_{\R} |x f(x)|\, dx<\infty$. Then, for any $t>0$,
\[
\frac{1}{2\pi n}\int^{e^{nt}}_1\int_{|x|\leq 1} \widehat{f}(0)\, e^{-\iota x X(s)}\, dx\, ds \overset{\mathcal{L}}{\longrightarrow } \Big(\frac{1}{\pi}\int_{\R} f(x)\, dx\Big)\, Z(t)
\]
as $n$ tends to infinity, where $Z(t)$ is an exponential random variable with parameter $t$.
\end{proposition}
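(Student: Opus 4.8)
The plan is to use the method of moments. Write $c_f:=\widehat f(0)=\int_\R f(x)\,dx$, which is finite since $f$ is bounded and $\int_\R|xf(x)|\,dx<\infty$. By Lemma \ref{m}, the candidate limit $\frac{c_f}{\pi}Z(t)$ has $m$-th moment $(c_f/\pi)^m\,m!\,t^m$, and these grow slowly enough that this distribution is determined by its moments; so it suffices to prove, for every $m\in\N$,
\[
\E\Big[\Big(\tfrac{c_f}{2\pi n}\int_1^{e^{nt}}\int_{|x|\le1}e^{-\iota xX(s)}\,dx\,ds\Big)^m\Big]\ \longrightarrow\ \Big(\tfrac{c_f}{\pi}\Big)^m m!\,t^m .
\]
Expanding the $m$-th power, taking the expectation inside, splitting $[1,e^{nt}]^m$ into the $m!$ regions determined by the order of $s_1,\dots,s_m$ and relabelling (using that $[-1,1]^m$ and $dx$ are symmetric), the left-hand side equals $\dfrac{c_f^m\,m!}{(2\pi n)^m}$ times $\displaystyle\int_{1<s_1<\cdots<s_m<e^{nt}}\int_{[-1,1]^m}\E[e^{-\iota\sum_{i=1}^m x_iX(s_i)}]\,dx\,ds$, and by \eqref{incr} with $\alpha=1$ and $s_0=0$ the characteristic function equals $\exp(-\sum_{i=1}^m|\sum_{j=i}^m x_j|(s_i-s_{i-1}))$. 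The linear bijection $y_i=\sum_{j=i}^m x_j$ has Jacobian $1$ and carries $[-1,1]^m$ onto $R:=\{y\in\R^m:|y_m|\le1,\ |y_i-y_{i+1}|\le1\ (1\le i\le m-1)\}$; together with $u_i=s_i-s_{i-1}$ this reduces the claim to
\[
J_n^{(m)}:=\frac1{n^m}\int_{\substack{u_1\ge1,\ u_i\ge0\,(2\le i\le m)\\ u_1+\cdots+u_m<e^{nt}}}K(u)\,du\ \longrightarrow\ (2t)^m ,\qquad K(u):=\int_R e^{-\sum_{i=1}^m|y_i|u_i}\,dy .
\]

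\noindent
The next step is a two-sided bound on the kernel $K$. Since $[-\tfrac12,\tfrac12]^m\subseteq R\subseteq[-m,m]^m$ and the exponent separates over coordinates,
\[
\prod_{i=1}^m\frac{2(1-e^{-u_i/2})}{u_i}\ \le\ K(u)\ \le\ \prod_{i=1}^m\frac{2(1-e^{-mu_i})}{u_i}\ \le\ \prod_{i=1}^m\min\Big(2m,\tfrac2{u_i}\Big).
\]
In particular, for a fixed large constant $A>1$, on the set $\{u_i>A\ \forall i\}$ one has $K(u)=\big(\prod_{i=1}^m\tfrac2{u_i}\big)(1+\theta)$ with $|\theta|\le C_m e^{-A/2}$, $C_m$ depending only on $m$.

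\noindent
Finally, I would split the region of $J_n^{(m)}$ according to the subset $S\subseteq\{1,\dots,m\}$ of indices with $u_i>A$. On the main piece $S=\{1,\dots,m\}$, the bound above and the substitution $w_i=\log u_i$ give $\int_{u_i>A\,\forall i,\ \sum u_i<e^{nt}}\prod_i u_i^{-1}\,du=\int_{w_i>\log A\,\forall i,\ \sum e^{w_i}<e^{nt}}dw$, which lies between $(nt-\log A-\log m)^m$ and $(nt-\log A)^m$, so this piece tends to $(2t)^m(1+\theta)$ with $|\theta|\le C_m e^{-A/2}$. For a proper subset $S$ with $|S|=k\le m-1$, the small variables ($u_i\in(0,A]$, $i\notin S$) contribute a bounded factor at most $(2mA)^{m-k}$ while the large ones contribute at most $2^k(nt)^k(1+o(1))$ by the same log-substitution (relaxing $\sum_i u_i<e^{nt}$ to $\sum_{i\in S}u_i<e^{nt}$), so this piece is $O\big((2mA)^{m-k}t^k\,n^{-(m-k)}\big)\to0$. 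Summing over the finitely many $S$ and using $K\ge0$ yields $(2t)^m(1-C_m e^{-A/2})\le\liminf_n J_n^{(m)}\le\limsup_n J_n^{(m)}\le(2t)^m(1+C_m e^{-A/2})$; letting $A\to\infty$ gives $J_n^{(m)}\to(2t)^m$, whence the $m$-th moment converges to $(c_f/\pi)^m m!\,t^m$, and the method of moments concludes.

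\noindent
The main obstacle is getting the constant exactly right rather than merely up to a multiplicative factor: the crude bound $K(u)\approx\prod_i\min(1,1/u_i)$ only shows that $J_n^{(m)}$ is bounded above and below by constant multiples of $t^m$, so one genuinely needs the sharp asymptotics $K(u)\sim\prod_i 2/u_i$ for large $u_i$ together with the fact that every region in which some $u_i$ stays bounded is negligible after dividing by $n^m$. The remaining estimates are routine.
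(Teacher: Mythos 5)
Your proposal is correct and follows essentially the same route as the paper: the method of moments, the Fourier/independent-increments expansion of $\E(F_n)^m$, the change of variables $y_i=\sum_{j=i}^m x_j$, and sandwiching the image of $[-1,1]^m$ between two boxes so that the kernel reduces to a product of logarithmically divergent one-dimensional integrals. The only difference is cosmetic: where you evaluate the limiting integral via a cutoff $A$ and the substitution $w_i=\log u_i$, the paper's Lemma \ref{a1} reaches the same value $(2t)^m$ by applying L'H\^opital's rule to $\frac1n\int_0^{e^{nt}}\frac{1-e^{-bu}}{u}\,du$ and showing the simplex constraint is negligible at scale $n^m$.
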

\begin{proof}  Let 
\[
F_n=\frac{1}{2\pi n}\int^{e^{nt}}_1\int_{|x|\leq 1} \widehat{f}(0)\, e^{\iota x X(s)}\, dx\, ds.
\] 
We first show tightness. Note that
\begin{align*}
\E[F_n]^2
&=\frac{(\widehat{f}(0))^2}{2\pi^2 n^2} \int^{e^{nt}}_1\int^{s_2}_1\int_{[-1,1]^2} e^{-|x_2|(s_2-s_1)-|x_2+x_1|s_1}\, dx\, ds\\
&=\frac{(\widehat{f}(0))^2}{2\pi^2 n^2} \int^{e^{nt}}_1\int^{s_2}_1\int_{|y_2|\leq 1}\int_{|y_1-y_2|\leq 1} e^{-|y_2|(s_2-s_1)-|y_1|s_1}\, dy\, ds.
\end{align*}
Integrating with respect to the variable $y$ and taking into account that $|y_1-y_2|\leq 1$ and $|y_2|\leq 1$ implies $|y_1|\leq 2$,
\begin{align*}
\E[F_n]^2
&\leq \frac{2(\widehat{f}(0))^2}{\pi^2 n^2} \int^{e^{nt}}_1\int^{s_2}_1 \frac{1-e^{-(s_2-s_1)}}{s_2-s_1}\frac{1-e^{-2s_1}}{s_1} \, ds.
\end{align*}
Making the change of variables $u_2=s_2-s_1$ and $u_1=2s_1$, we can write
\begin{align*}
\E[F_n]^2
&\leq \frac{2(\widehat{f}(0))^2}{\pi^2 n^2} \int^{e^{2nt}}_0\int^{e^{2nt}}_0 \frac{1-e^{-u_2}}{u_2}\frac{1-e^{-u_1}}{u_1} \, du\\
&= \frac{2(\widehat{f}(0))^2}{\pi^2} \Big(\frac{1}{n}\int^{2e^{nt}}_0\frac{1-e^{-v}}{v} \, dv\Big)^2\\
&\leq \frac{2(\widehat{f}(0))^2}{\pi^2} \Big(\frac{1}{n}\int^{1}_0 1\,  dv+\frac{1}{n}\int^{2e^{nt}}_1\frac{1}{v} \, dv\Big)^2\\
&\leq c\, (\widehat{f}(0))^2 (1+t)^2.
\end{align*}
We next show the convergence of all moments. For any $m\in\N$, let $I^n_m=\E\big(F_n\big)^m$. Then
\begin{align*}
I^n_m
&=\Big(\frac{\widehat{f}(0)}{2\pi}\Big)^m \frac{1}{n^m}\,\E\Big(\int^{e^{nt}}_1\int_{|x|\leq 1} \, e^{\iota x X(s)}\, dx\, ds \Big)^m\\
&=\Big(\frac{\widehat{f}(0)}{2\pi}\Big)^m \frac{m! }{n^m}\int_{[-1,1]^m} \int_{D_m}\, e^{-\sum\limits^m_{i=1} |\sum\limits^m_{j=i}x_j|(s_i-s_{i-1})}\, ds\, dx,
\end{align*}
where $D_m=\big\{1<s_1<\cdots<s_m<e^{nt} \big\}$, with the convention $s_0=0$.

By Lemma \ref{a1},
\begin{align*}
\lim_{n\to\infty} I^n_m
&=\Big(\frac{\widehat{f}(0)}{2\pi}\Big)^m\, m!\, \lim_{n\to\infty} \frac{1}{n^m}\int_{[-1,1]^m} \int_{D_m}\, e^{-\sum\limits^m_{i=1} |\sum\limits^m_{j=i}x_j|(s_i-s_{i-1})}\, ds\, dx\\
&=\Big(\frac{\widehat{f}(0)}{2\pi}\Big)^m\, m!\, (2t)^m\\
&=\Big(\frac{1}{\pi}\int_{\R} f(x)\, dx\Big)^m\, \E[Z(t)]^m.
\end{align*}
Using the method of moments, the proof is completed.
\end{proof}

\bigskip

{\medskip \noindent \textbf{Proof of Theorem \ref{1st}.}}  This follows
from Lemma \ref{diff}, Proposition \ref{1st1}, and the argument before Lemma \ref{diff}.

\bigskip

\section{Proof of Theorem \ref{2nd}}
In this section, we shall show Theorem \ref{2nd}. Since $f$ is bounded,  we only need to consider the convergence of the following random variables
\[
F_n=\frac{1}{\sqrt{n}}\int^{e^{nt}}_1 f(X(s))\, ds.
\]

For $m\in\N$, let
\[
I^n_m= \frac{m!}{n^{\frac{m}{2}}}\, \E\Big[\int_{D_{m,1}} \Big(\prod^m_{i=1}f(X(s_i))\Big)\, ds\Big],
\]
where $D_{m,1}=\big\{(s_1, \dots, s_m) \in D_m: s_i-s_{i-1}\geq n^{-m},\, i=2,3,\dots, m\big\}$ and $D_m=\big\{1<s_1<\cdots<s_m<e^{nt} \big\}$ as before.
Then,   taking into account that $f$ is bounded, we can write
\begin{align*}
\Big|\E(F_n)^m-I^n_m\Big|  & \le \frac{m!}{n^{\frac{m}{2}}}\, \sum_{j=1}^m \E\Big[\int_{D_{m}\cap\{|s_j -s_{j-1}| <n^{-m}\}} \Big(\prod^m_{i=1}|f(X(s_i))|\Big)\, ds\Big]\\
& \le \|f\|_\infty\, \frac{m m!}{n^{\frac{3m}{2}}}\,  \E\Big[\int_{D_{m-1}} \Big(\prod^{m-1}_{i=1}|f(X(s_i))|\Big)\, ds\Big].
\end{align*}
Thus,  Theorem \ref{1st} implies that
\begin{equation} \label{eq1}
  \Big|\E(F_n)^m-I^n_m\Big|\le c_1 n^{-\frac m2 -1}.
\end{equation}
Applying the Fourier transform, we can write
\begin{align*}
I^n_m
&=\frac{m!}{(2\pi \sqrt{n})^m}\int_{\R^m}\int_{D_{m,1}} \Big(\prod^m_{i=1}\widehat{f}(x_i)\Big)\,\E\Big(e^{\iota \sum\limits^m_{i=1} x_i X(s_i)}\Big)\, ds\, dx.
\end{align*}
Using  \eref{incr} and then making the change of variables $y_i=\sum\limits^m_{j=i}x_j$ for $i=1,2,\dots, m$ gives
\begin{align*}
I^n_m
&=\frac{m!}{(2\pi  \sqrt{n})^m}\int_{\R^m}\int_{D_{m,1}} \Big(\prod^m_{i=1}\widehat{f}(y_i-y_{i+1})\Big)\, e^{-\sum\limits^{m}_{i=1} |y_i| (s_i-s_{i-1})}\, ds\, dy.
\end{align*}

Let $I^n_{m,0}=I^n_m$. For $k=1,\dots,m$, we define
\begin{align*}
I^n_{m,k}
&=\frac{m!}{(2\pi  \sqrt{n})^m}\int_{\R^m}\int_{D_{m,1}}  I_k\,  \prod^{m}_{i=k+1} \widehat{f}(y_i-y_{i+1})\, e^{-\sum\limits^{m}_{i=1} |y_i| (s_i-s_{i-1})}\, ds\, dy,
\end{align*}
where 
\[
I_k =
\begin{cases}
\prod\limits^{\frac{k-1}{2}}_{j=1}|\widehat{f}(y_{2j})|^2 \widehat{f}(-y_{k+1}), & \text{if } k \text{ is odd}; \\
\prod\limits^{\frac{k}{2}}_{j=1}|\widehat{f}(y_{2j})|^2, & \text{if }k\text{ is even}.
\end{cases}
\]

The following proposition, which is similar to Proposition 3.1 in \cite{NX2}, controls the difference between $I^n_{m,k-1}$ and $ I^n_{m,k}$. We fix a positive constant $\gamma$ strictly less than $\frac{1}{2}$.

\begin{proposition} \label{chain} 
For $k=1,2,\dots,m$, there exists a positive constant $c$, which depends on $\gamma$, such that
\[
|I^n_{m,k-1}-I^n_{m,k}|\leq c\, n^{-\gamma}.
\]
\end{proposition}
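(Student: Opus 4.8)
The plan is to analyze the difference $I^n_{m,k-1}-I^n_{m,k}$ by isolating the effect of replacing the factor $\widehat f(y_k-y_{k+1})$ (which appears in $I^n_{m,k-1}$ when $k$ is even, paired with the previously produced $\widehat f(-y_k)$, or the factor that builds the $|\widehat f(y_k)|^2$ term when $k$ is even) with the corresponding factor in $I_k$. Concretely, in $I^n_{m,k-1}$ the product of $\widehat f$-factors with indices $\le k$ is $I_{k-1}\,\widehat f(y_k-y_{k+1})$, while in $I^n_{m,k}$ it is $I_k$. When $k$ is even, $I_k = I_{k-1}\,\widehat f(-y_k)\,\widehat f(y_k-y_{k+1})$ is replaced — no wait: more carefully, $I_{k-1}$ (odd index $k-1$) ends in $\widehat f(-y_k)$, so $I_{k-1}\widehat f(y_k-y_{k+1})$ becomes, in $I_k$, $\prod_{j\le k/2}|\widehat f(y_{2j})|^2 = (\text{earlier factors})\,|\widehat f(y_k)|^2$, i.e. the pair $\widehat f(-y_k)\widehat f(y_k-y_{k+1})$ has been replaced by $|\widehat f(y_k)|^2=\widehat f(-y_k)\widehat f(y_k)$. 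So the difference is governed by $\widehat f(y_k-y_{k+1})-\widehat f(y_k)$, which by the Lipschitz-type bound $|\widehat f(x)-\widehat f(x')|\le c|x-x'|$ (coming from $\int|xf(x)|\,dx<\infty$) is bounded by $c|y_{k+1}|$. When $k$ is odd the analogous step replaces $\widehat f(y_k - y_{k+1})$ coming after $I_{k-1}$ by $\widehat f(-y_{k+1})$, wait that's also controlled — the point is that in each case the difference of the two integrands is bounded by a constant times $|y_{k+1}|$ times the product of absolute values of all the remaining $\widehat f$ factors and the exponential $e^{-\sum|y_i|(s_i-s_{i-1})}$.

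First I would write $|I^n_{m,k-1}-I^n_{m,k}|$ as a single integral over $\R^m\times D_{m,1}$ whose integrand is at most
\[
\frac{c\,m!}{(2\pi\sqrt n)^m}\,|y_{k+1}|\;\Big(\prod_{i\ne k}|\widehat f(y_i-y_{i+1})|\Big)\,e^{-\sum_{i=1}^m|y_i|(s_i-s_{i-1})},
\]
(with the convention $y_{m+1}=0$ and the product suitably interpreted), using that $\widehat f$ is bounded to dominate the factors with index $\le k$ other than the one being compared. Then I would integrate out the $y_i$ variables. The variables $y_i$ with $i\ne k+1$ integrate against $e^{-|y_i|(s_i-s_{i-1})}$ to give a factor $\lesssim (s_i-s_{i-1})^{-1}$ (truncated by the boundedness of $\widehat f$, so really $\min(1,(s_i-s_{i-1})^{-1})$), while the variable $y_{k+1}$ carries the extra weight $|y_{k+1}|$ and integrates to $\lesssim (s_{k+1}-s_k)^{-2}$ on the region $s_{k+1}-s_k\ge1$ — but on $D_{m,1}$ we only know $s_{k+1}-s_k\ge n^{-m}$, so this is where the gap shows up. Actually $\int_\R |y|e^{-|y|u}\,dy = 2/u^2$, and on $D_{m,1}$ the smallest gaps are $\ge n^{-m}$, giving $(s_{k+1}-s_k)^{-2}$ which is integrable in $s_{k+1}$ near $s_k$ only up to the cutoff; the $n^{-m}$ cutoff combined with one power of $\sqrt n$ from the $n^{-m/2}$ prefactor is what produces $n^{-\gamma}$ for any $\gamma<1/2$.

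The key computation is therefore: after integrating all $y$-variables, bound $|I^n_{m,k-1}-I^n_{m,k}|$ by
\[
\frac{c}{n^{m/2}}\int_{D_{m,1}}\frac{1}{(s_{k+1}-s_k)^2}\prod_{i\ne k+1}\min\!\Big(1,\frac{1}{s_i-s_{i-1}}\Big)\,ds,
\]
then substitute $u_i=s_i-s_{i-1}$ so the region becomes $u_i\ge n^{-m}$ for the relevant indices and $\sum u_i\le e^{nt}$, and each factor $\int_{n^{-m}}^{e^{nt}}\min(1,u^{-1})\,du \lesssim nt$ except the special one $\int_{n^{-m}}^{\infty}u^{-2}\,du \lesssim n^{m}$. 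Counting: $m-1$ ordinary gap factors each contribute $O(n)$ (so $O(n^{m-1})$ up to logs/constants — actually $\min(1,1/u)$ integrates to $1+\log(e^{nt})=1+nt=O(n)$), the special factor contributes $O(n^m)$, total $O(n^{2m-1})$, times $n^{-m/2}$ gives $O(n^{3m/2-1})$ which is the wrong power — so one must be more careful and not bound every ordinary gap crudely. The resolution, as in \cite{NX2}, is that $I^n_m$ itself is $O(n^0)$ (Theorem \ref{1st}-type scaling shows $n^{-m/2}\int_{D_m}\prod\min(1,1/u_i)\,ds$ is bounded), so only the \emph{local} singularity at $s_{k+1}=s_k$ matters: one splits $\int_{n^{-m}}^{1}u_{k+1}^{-2}(\cdots)\,du_{k+1}$ and uses that removing the constraint $u_{k+1}\ge n^{-m}$ versus $u_{k+1}\ge n^{-\gamma/\text{(something)}}$ costs a controlled amount, while the remaining $m-1$ integrations reproduce the bounded quantity $\lesssim n^{(m-1)/2}$. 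Thus $|I^n_{m,k-1}-I^n_{m,k}|\lesssim n^{-m/2}\cdot n^{(m-1)/2}\cdot\big(\text{gain from the }u_{k+1}\text{ integral}\big)$, and the $u_{k+1}$-integral over $[n^{-m},1]$ of $u^{-2}$ is $O(n^m)$ — still too big unless one also exploits that the weight $|y_{k+1}|$ can instead be distributed, or that one uses $|y_{k+1}|\le |y_k|+|y_k-y_{k+1}|$ with the first term absorbing into $e^{-|y_k|(s_k-s_{k-1})}$. The main obstacle, and the part I would spend the most care on, is precisely this bookkeeping: getting the $n^{-\gamma}$ with $\gamma$ arbitrarily close to $1/2$ requires balancing the $n^{-m/2}$ prefactor, the $n^{-m}$ time-cutoff in $D_{m,1}$, and the extra $|y_{k+1}|$ weight so that exactly a bit less than half a power of $n$ is gained; I expect the argument to mirror Proposition 3.1 of \cite{NX2} closely, using the Lipschitz bound on $\widehat f$ to produce the $|y_{k+1}|$ factor, the explicit Gaussian-type integral $\int|y|e^{-|y|u}dy=2u^{-2}$, and the change of variables to gap coordinates, with the cutoff $n^{-m}$ chosen generously enough that $(\text{cutoff})^{?}\cdot n^{?}$ lands below $n^{-\gamma}$.
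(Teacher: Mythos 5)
Your decomposition is the right one: you correctly identify that $I^n_{m,k-1}-I^n_{m,k}$ is controlled by swapping a single factor, $\widehat f(y_k-y_{k+1})$ for $\widehat f(-y_{k+1})$ when $k$ is odd, or $\widehat f(y_k-y_{k+1})$ for $\widehat f(y_k)$ when $k$ is even, and that this difference is governed by the smoothness of $\widehat f$ together with $\widehat f(0)=0$. But the quantitative core of your argument does not close, and you say so yourself: using the full Lipschitz bound $|\widehat f(x)-\widehat f(x')|\le c|x-x'|$ produces a weight $|y_{k+1}|$ (or $|y_k|$), hence $\int_{\R}|y|e^{-|y|u}\,dy=2u^{-2}$, hence a factor of order $n^{m}$ from the cutoff $u\ge n^{-m}$, which overwhelms the prefactor $n^{-m/2}$. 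Neither of the fixes you float (redistributing the weight, or writing $|y_{k+1}|\le|y_k|+|y_k-y_{k+1}|$) resolves this, and the appeal to ``the local singularity is all that matters'' is not a proof.

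The missing idea is an interpolation. Since $\widehat f$ is bounded and Lipschitz with $\widehat f(0)=0$, one has $|\widehat f(x)|\le c_\alpha(|x|^\alpha\wedge 1)$ and $|\widehat f(x)-\widehat f(x')|\le c_\alpha|x-x'|^\alpha$ for \emph{every} $\alpha\in[0,1]$, and the paper's proof uses this with $\alpha$ chosen \emph{small} as a function of $m$ and $\gamma$. Each weighted variable then contributes $\int_{\R}|y|^{\alpha}e^{-|y|u}\,dy\le c\,u^{-1-\alpha}$, so the corresponding gap integral over $[n^{-m},e^{nt}]$ costs only $n^{m\alpha}$ instead of $n^{m}$, while each unweighted gap costs a single factor of $n$ as in the first-order computation. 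Bounding every remaining factor $|\widehat f(y_i-y_{i+1})|$ by $c_\alpha(|y_i|^\alpha+|y_{i+1}|^\alpha)$ and counting the at most $\lfloor m/2\rfloor+1$ weighted variables gives $|I^n_{m,k-1}-I^n_{m,k}|\le c\,n^{-m/2+(\lfloor m/2\rfloor+1)m\alpha+(m-1-\lfloor m/2\rfloor)}\le c\,n^{-1/2+(\lfloor m/2\rfloor+1)m\alpha}$, and choosing $\alpha$ so that $(\lfloor m/2\rfloor+1)m\alpha=\tfrac12-\gamma$ yields the claim. Without this small-$\alpha$ device your bookkeeping cannot land below $n^{-\gamma}$, so as written the proposal has a genuine gap.
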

 
\begin{proof} The proof will be done in several steps.

\noindent \textit{Step 1.} Suppose first that $k=1$. Making the change of variables $u_1=s_1$, $u_i =s_i -s_{i-1}$, for $2\le i\le m$, we can show that $|I^n_{m,0}-I^n_{m,1}|$ is less than a constant multiple of
\[
n^{-\frac{m}{2}}\int_{\R^m}\int_{O_m} \big|\widehat{f}(y_1-y_2)-\widehat{f}(-y_2)\big| \Big(\prod^m_{i=2}  |\widehat{f}(y_i-y_{i+1})|\Big)\, e^{-\sum\limits^m_{i=1} |y_i|u_i}\, du\, dy,
\]
where $O_m=\big\{(u_1, \dots, u_m):  u_1>1, \sum_{i=1}^m u_i<e^{nt},  n^{-m}<u_i,\, i=2,3,\dots, m \big\}$ and $y_{m+1}=0$.

Taking into account that  that $|\widehat{f}(x)|\leq c_{\alpha}(|x|^{\alpha}\wedge 1)$ for $\alpha\in[0,1]$, and $O_m \subset [n^{-m}, e^{nt}]^m$,  we obtain
\begin{align*}
|I^n_{m,0}-I^n_{m,1}|
&\leq c_2\, n^{-\frac{m}{2}}\int_{\R^m}\int_{[n^{-m}, e^{nt}]^m} |y_1|^{\alpha}\prod^{\lfloor\frac{m}{2}\rfloor }_{j=1} \big(|y_{2j}|^{\alpha}+|y_{2j+1}|^{\alpha}\big) e^{-\sum\limits^m_{i=1} |y_i| u_i}\, du\, dy\\
&\leq  c_3\, n^{-\frac{m}{2}+(\lfloor\frac{m}{2}\rfloor+1) (m\alpha)+(m-1-\lfloor\frac{m}{2}\rfloor)}\\
&\leq  c_3\, n^{-\frac{1}{2}+(\lfloor\frac{m}{2}\rfloor+1) (m\alpha)}. 
\end{align*}
Choosing $\alpha$ small enough such that $-\frac{1}{2}+(\lfloor\frac{m}{2}\rfloor+1) (m\alpha)=-\gamma$ gives
\begin{align*}
|I^n_{m,0}-I^n_{m,1}| \leq  c_3\, n^{-\gamma}. 
\end{align*}

\noindent \textit{Step 2:} Suppose now that $k=2$. 
By the definition of $I^n_{m,1}$ and $I^n_{m,2}$, $|I^n_{m,1}-I^n_{m,2}|$ is less than a constant multiple of
\[
n^{-\frac{m}{2}}\int_{\R^m}\int_{ [n^{-m}, e^{nt}]^m} \big|\widehat{f}(-y_2)\big|\big|\widehat{f}(y_2-y_3)-\widehat{f}(y_2)\big| \Big(\prod^m_{i=3}  |\widehat{f}(y_i-y_{i+1})|\Big)\, e^{-\sum\limits^m_{i=1} |y_i|\, u_i}\, du\, dy.
\]
Using similar arguments as in Step 1,
 \begin{align*}
 |I^n_{m,1}-I^n_{m,2}|
&\leq c_4\, n^{-\frac{m}{2}} \int_{\R^m}\int_{ [n^{-m}, e^{nt}]^m} |y_2|^{\alpha}|y_3|^{\alpha} \Big(\prod^{\lfloor \frac{m}{2}\rfloor}_{j=2}  |y_{2j}|^{\alpha}+|y_{2j+1}|^{\alpha}\Big)\, e^{-\sum\limits^m_{i=1} |y_i|\, u_i}\, du\, dy\\
&\leq c_5\, n^{-\frac{m}{2}+(\lfloor \frac{m}{2}\rfloor+1)(m\alpha)+(m-1-\lfloor \frac{m}{2}\rfloor)}\\
&\leq c_5\, n^{-\frac{1}{2}+(\lfloor \frac{m}{2}\rfloor+1)(m\alpha)}\\
&= c_5\, n^{-\gamma}.
\end{align*}

\noindent \textit{Step 3:}
 Suppose that  $k$ is odd and $3\leq k\leq m$.  Since $k$ is odd, $|I^n_{m,k-1}-I^n_{m,k}|$ is less than a constant multiple of
\begin{align*}
&n^{-\frac{m}{2}}\int_{\R^m}\int_{ [n^{-m}, e^{nt}]^m} \Big(\prod^m_{i=k+1}  |\widehat{f}(y_i-y_{i+1})|\Big)\, \big|\widehat{f}(y_k-y_{k+1})-\widehat{f}(-y_{k+1})\big| \\
&\qquad\qquad\times \Big(\prod^{\frac{k-1}{2}}_{j=1}|\widehat{f}(y_{2j})|^2\Big)\, e^{-\sum\limits^m_{i=1} |y_i|\, u_i}\, du\, dy.
\end{align*}
Therefore,   $|I^n_{m,k-1}-I^n_{m,k}|$ is less than a constant multiple of
\[
n^{-\frac{m}{2}}\int_{\R^m}\int_{O_m}  \Big(\prod^m_{i=k+1}  |\widehat{f}(y_i-y_{i+1})|\Big)  |y_{k}|^{\alpha}\Big(\prod^{\frac{k-1}{2}}_{j=1}|\widehat{f}(y_{2j})|^2\Big) e^{-\sum\limits^m_{i=1} |y_i|\, u_i}\, du\, dy.
\]
Integrating with respect to the variables $x_i$s and $u_i$s with $i\leq k-1$ gives
\begin{align*}
|I^n_{m,k-1}-I^n_{m,k}|
&\leq c_6\, n^{-\frac{m-(k-1)}{2}}\int_{\R^{m-k+1}}\int_{[n^{-m}, e^{nt}]^{m-k+1}}  \Big(\prod^m_{i=k+1}  |\widehat{f}(y_i-y_{i+1})|\Big) |y_k|^{\alpha}\\
&\qquad\qquad\times e^{-\sum\limits^m_{i=k} |y_i|\, u_i}\, du\, dy,
\end{align*}
where $du=du_k\cdots du_{m}$, $dy=dy_k\cdots dy_{m}$. Applying Step 1 and then doing some algebra, we can obtain
\[
|I^n_{m,k-1}-I^n_{m,k}|\leq c_6\, n^{-\frac{1}{2}+(\lfloor\frac{m-k+1}{2}\rfloor+1)(m-k+1)\alpha}=c_6\, n^{-\gamma}.
\]
\noindent \textit{Step 4:} The case $k$ is even  and $4\leq k\leq m$ is handled in a similar way.
\end{proof}

\begin{proposition} \label{2nd1} Suppose that $f$ is bounded, $\int_{\R} |x f(x)|\, dx<\infty$ and $\int_{\R} f(x)\, dx=0$. Then, for any $t>0$,
\[
\frac{1}{\sqrt{n}}\int^{e^{nt}}_1 f(X(s))\, ds \overset{\mathcal{L}}{\longrightarrow }\Big( \frac{1}{\pi^2}\int_{\R}|\widehat{f}(x)|^2|x|^{-1}\, dx\Big)^{-\frac{1}{2}}\, \sqrt{Z(t)}\, \eta
\]
as $n$ tends to infinity, where $\eta$ is a standard normal random variable independent of $Z(t)$.
\end{proposition}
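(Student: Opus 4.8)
\noindent The plan is to use the method of moments, showing that for every $m\in\N$ the moment $\E(F_n)^m$ converges to the $m$-th moment of the limiting variable and then concluding convergence in law. By \eref{eq1} it is enough to analyze $I^n_m=I^n_{m,0}$, and Proposition \ref{chain} allows me to telescope along the chain, since
\[
|I^n_{m,0}-I^n_{m,m}|\le\sum_{k=1}^m|I^n_{m,k-1}-I^n_{m,k}|\le c\,m\,n^{-\gamma}\longrightarrow 0.
\]
Thus the entire problem reduces to computing $\lim_{n\to\infty}I^n_{m,m}$. The method of moments is legitimate here because, by Lemma \ref{m}, the even moments of the limit grow only like $(2p)!$, so Carleman's condition holds and the limiting law is determined by its moments.

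For $m$ odd the reduced kernel $I_m$ contains the factor $\widehat f(-y_{m+1})$ with $y_{m+1}=0$, and $\widehat f(0)=\int_\R f(x)\,dx=0$ by hypothesis; hence $I^n_{m,m}=0$ for every $n$. The odd moments of $F_n$ therefore vanish in the limit, in agreement with the symmetry of $\eta$.

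The heart of the matter is the even case $m=2p$, where $I_{2p}=\prod_{j=1}^p|\widehat f(y_{2j})|^2$. I would first pass to the increment variables $u_1=s_1$ and $u_i=s_i-s_{i-1}$ for $2\le i\le m$, so that the exponent becomes $\sum_i|y_i|u_i$, and then integrate out the $y$'s. The $p$ odd-indexed variables $y_1,y_3,\dots,y_{2p-1}$ carry no Fourier weight, so each yields $\int_\R e^{-|y|u}\,dy=2/u$, while the $p$ even-indexed variables yield $g(u_{2j})$ with $g(u):=\int_\R|\widehat f(y)|^2 e^{-|y|u}\,dy$. The decisive facts are that $g$ is integrable on $(0,\infty)$ with $\int_0^\infty g(u)\,du=\int_\R|\widehat f(y)|^2|y|^{-1}\,dy<\infty$ (integrability at the origin uses exactly $\widehat f(0)=0$ together with the Lipschitz bound $|\widehat f(y)|\le c|y|$ furnished by $\int_\R|xf(x)|\,dx<\infty$), so the even increments stay of order one, whereas each odd factor $1/u_{2j-1}$ integrated over $[1,e^{nt}]$ produces a logarithm $\int_1^{e^{nt}}u^{-1}\,du=nt$. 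Writing $\sigma^2:=\frac1{\pi^2}\int_\R|\widehat f(x)|^2|x|^{-1}\,dx$ and combining the prefactor $\frac{(2p)!}{(2\pi)^{2p}n^p}$ with the $2^p$ from the odd integrals, the $p$-th power of $\int_\R|\widehat f|^2|y|^{-1}\,dy$ from the even integrals, and $(nt)^p$ from the time growth, I obtain $\lim_{n\to\infty}I^n_{2p,2p}=\sigma^{2p}\,\frac{(2p)!\,t^p}{2^p}$, which by Lemma \ref{m} is the $2p$-th moment of $\sigma\sqrt{Z(t)}\,\eta$.

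The main obstacle is turning this heuristic into a rigorous limit, because the time integral runs over the coupled simplex $O_{2p}=\{u_1>1,\ u_i>n^{-2p}\ (i\ge2),\ \sum_i u_i<e^{nt}\}$ rather than a product set, so the factorization into ``localized even increments'' times ``logarithmically large odd increments'' has to be justified. I would establish it by matching upper and lower bounds: for the upper bound I relax the global constraint to $\sum_{\mathrm{odd}}u_{2j-1}<e^{nt}$ and let each even increment range over $(0,\infty)$; for the lower bound I confine the even increments to a fixed compact set and reduce the odd budget by a constant, using $\int_{n^{-2p}}^\infty g\to\int_0^\infty g$ and the fact that replacing the endpoints $1$ and $n^{-2p}$ alters $\int u^{-1}\,du$ only by $O(\log n)=o(n)$. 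The combinatorial kernel of the estimate is that the $p$-fold integral $\int_{\sum_j v_j<e^{nt}}\prod_j v_j^{-1}\,dv$ has leading term $(nt)^p$ with coefficient one, the simplex constraint being asymptotically negligible because the weight $\prod_j v_j^{-1}$ concentrates its mass near the origin, where $\sum_j v_j<e^{nt}$ is automatic. All remaining contributions are of strictly lower order in $n$, so they do not affect the limit, and the method of moments then yields the stated convergence in law.
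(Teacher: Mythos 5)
Your proposal is correct and follows essentially the same route as the paper's proof: the method of moments (you invoke Carleman's condition where the paper proves tightness via $\E(F_n)^2\le ct$, which amounts to the same justification), reduction to $I^n_{m,m}$ through \eref{eq1} and the telescoping of Proposition \ref{chain}, vanishing odd moments, and for even $m$ a sandwich argument that factorizes the even increments (each contributing $\int_{\R}|\widehat{f}(y)|^2|y|^{-1}\,dy$, finite precisely because $\widehat f(0)=0$ and $|\widehat f(y)|\le c|y|$) against the $p$ logarithmically large odd increments, with the simplex constraint shown negligible --- exactly the content of the paper's Lemmas \ref{a2} and \ref{a3}. Two remarks: your observation that $I^n_{m,m}=0$ identically for odd $m$ (since $I_m$ carries the factor $\widehat f(-y_{m+1})=\widehat f(0)=0$) is consistent with the paper's literal definition of $I_k$ and is slightly cleaner than the paper's Step 2, which retains $\widehat f(y_m)$ and bounds $I^n_{m,m}=O(n^{-1/2})$ directly; and your limiting constant $\big(\frac{1}{\pi^2}\int_{\R}|\widehat{f}(x)|^2|x|^{-1}\,dx\big)^{1/2}$, with \emph{positive} exponent, matches the moments the paper itself computes in \eref{2nd1-1} and \eref{2nd1-2}, the exponent $-\frac{1}{2}$ in the statement being a sign typo in the paper's final algebraic step.
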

\begin{proof}  The proof will be done in several steps.

\medskip \noindent
\textbf{Step 1} \quad  We first show tightness.  Let $F_n=\frac{1}{\sqrt{n}}\int^{e^{nt}}_1 f(X(s))\, ds$. Then, using the Fourier transform,
\begin{align*}
\E(F_n)^2
&=\frac{2}{n}\int^{e^{nt}}_1\int^{s_2}_1\int_{\R^2} \widehat{f}(x_1)\widehat{f}(x_2)\, e^{-|x_2|(s_2-s_1)-|x_2+x_1|s_1}\, dx\, ds.
\end{align*}
Since $|\widehat{f}(x)|\leq c_{\alpha} (|x|^{\alpha}\wedge 1)$ for all $x\in\R$ and $\alpha\in[0,1]$,
\begin{align*}
\E(F_n)^2
&\leq \frac{c_1}{n}\int^{e^{nt}}_1\int^{s_2}_1\int_{\R^2} |\widehat{f}(x_2)|\, e^{-|x_2|(s_2-s_1)-|x_2+x_1|s_1}\, dx\, ds\\
&\leq \frac{c_2}{n}\Big(\int^{e^{nt}}_1s^{-1}_1\, ds_1\Big)\Big(\int_{\R} |\widehat{f}(x_2)||x_2|^{-1}\, dx_2\Big)\\
&\leq c_3\, t.
\end{align*}

\medskip \noindent
\textbf{Step 2} \quad We show the convergence of all odd moments.  Assume that $m$ is odd. Recall the estimate  (\ref{eq1}), which allows us to replace 
$\E(F_n)^{m}$ by $I^n_{m}$. 
By Proposition \ref{chain}, it suffices to show 
\[
\lim_{n\to\infty} I^n_{m,m}=0,
\]
where
\[
I^n_{m, m}=\frac{m!}{(2\pi  \sqrt{n})^m}\int_{\R^m}\int_{D_{m,1}}  \widehat{f}(y_m)  \prod^{\frac{m-1}{2}}_{j=1} |\widehat{f}(y_{2j})|^2\, e^{-\sum\limits^{m}_{i=1} |y_i| (s_i-s_{i-1})}\, ds\, dy.
\]
Making the change of variables $u_1=s_1$, $u_i =s_i -s_{i-1}$, for $2\le i\le m$ yields
\[
I^n_{m, m}=\frac{m!}{(2\pi  \sqrt{n})^m}\int_{\R^m}\int_{O_{m}}  \widehat{f}(y_m)  \prod^{\frac{m-1}{2}}_{j=1} |\widehat{f}(y_{2j})|^2\, e^{-\sum\limits^{m}_{i=1} |y_i| u_i}\, du\, dy,
\]
where, as before,
\[
O_{m}=\big\{(u_1, \dots, u_{m}): \, 1<u_1, \sum_{i=1}^m u_i <e^{nt}, n^{-m}<u_i<e^{nt},\, i=2,\dots, m \big\}.
\]
Notice that $O_m \subset [1,e^{nt}] \times [n^{-m}, e^{nt}]^{m-1}$. As a consequence,
\begin{align*}
\big| I^n_{m, m}\big|
&\leq c_4\, n^{-\frac{m}{2}}\int_{\R^m}\int_{[1,e^{nt}] \times [n^{-m}, e^{nt}]^{m-1}}  |\widehat{f}(y_m)|\prod^{\frac{m-1}{2}}_{j=1} |\widehat{f}(y_{2j})|^2\, e^{-\sum\limits^{m}_{i=1} |y_i| u_i}\, du\, dy\\
&\leq c_5\,n^{-\frac{m}{2}}\, \Big(\int_{\R} |\widehat{f}(y)|^2|y|^{-1}dy\Big)^{\frac{m-1}{2}} \Big(\int^{e^{nt}}_{n^{-m}} u^{-1}\, du\Big)^{\frac{m-1}{2}}\, \Big(\int_{\R}|\widehat{f}(y)|\, |y|^{-1} dy\Big)\\
&\leq c_6\, n^{-\frac{1}{2}}.
\end{align*}
Combining these estimates gives $\lim\limits_{n\to\infty}\E(F_n)^{m}=0$ when $m$ is odd.

\medskip \noindent
\textbf{Step 3} \quad We show the convergence of all even moments. Assume that $m$ is even. Recall  the estimate (\ref{eq1}). 
By Proposition \ref{chain}, it suffices to show 
\begin{align} \label{2nd1-0}
\lim_{n\to\infty} I^n_{m,m}=\Big( \frac{1}{2\pi^2}\int_{\R}|\widehat{f}(x)|^2|x|^{-1}\, dx\Big)^{-\frac{m}{2}}\, \E\Big(\sqrt{Z(t)}\, \eta\Big)^{m},
\end{align}
where
\[
I^n_{m, m}=\frac{m!}{(2\pi  \sqrt{n})^m}\int_{\R^m}\int_{D_{m,1}} \Big(\prod^{m/2}_{j=1} |\widehat{f}(y_{2j})|^2\Big)\, e^{-\sum\limits^{m}_{i=1} |y_i| (s_i-s_{i-1})}\, ds\, dy.
\]

Making the change of variables $u_1=s_1$, $u_i =s_i -s_{i-1}$, for $2\le i\le m$ and then integrating with respect to all $y_i$s with $i$ odd gives
\begin{align*}
I^n_{m,m}
&=\frac{2^{\frac{m}{2}}m!}{(2\pi\sqrt{n})^{m}}\int_{\R^{\frac{m}{2}}}\int_{O_{m}}  \Big(\prod^{m/2}_{j=1}| \widehat{f}(y_{2j})|^2\Big)\, e^{-\sum\limits^{m/2}_{j=1} |y_{2j}| u_{2j}}
\Big(\prod^{m/2}_{j=1} u^{-1}_{2j-1}\Big)\, du\, d\overline{y},
\end{align*}
where $d\overline{y}=dy_2\, dy_4\,\cdots dy_m$ and, as before,
\[
O_{m}=\big\{(u_1, \dots, u_{m}): \, 1<u_1, \sum_{i=1}^m u_i <e^{nt}, n^{-m}<u_i<e^{nt},\, i=2,\dots, m \big\}.
\]
Taking into account that $O_m \subset [1,\infty) \times [n^{-m}, e^{nt}]^{m-1}$ yields
\begin{align} \label{2nd1-1} \nonumber
\limsup_{n\to\infty} I^n_{m,m}
&\leq\lim_{n\to\infty}\frac{2^{\frac{m}{2}}m!}{(2\pi)^{m}}\Big(\int_{\R}\int^{\infty}_0|\widehat{f}(y)|^2\, e^{-|y| u}\, du\, dy\Big)^{\frac{m}{2}}\Big(\frac{1}{n}\int^{e^{nt}}_{n^{-m}}u^{-1} du\Big)^{\frac{m}{2}}\\ 
&= \frac{m!\ t^\frac{m}{2}}{2^{\frac{m}{2}}} \Big(\frac{1}{\pi^2}\int_{\R} | \widehat{f}(y)|^2|y|^{-1}\, dy\Big)^{\frac{m}{2}}.
\end{align}
On the other hand, using $O_{m,1}\times O_{m,2} \subset O_m$, where
\[
O_{m,1}=\Big\{(u_1,\dots, u_{m-1}): u_1>1,\, u_{2j-1}>n^{-m},\, j=2,\dots,m/2,\, \sum^{m/2}_{j=1} u_{2j-1}<e^{nt}/2\Big\}
\]
and
\[
O_{m,2}=\Big\{(u_2,\dots, u_{m}): \, u_{2j}>n^{-m},\, j=1,\dots,m/2,\, \sum^{m/2}_{j=1} u_{2j}<e^{nt}/2\Big\},
\]
gives
\begin{align*}
I^n_{m,m}
&\geq \frac{2^{\frac{m}{2}} m!}{(2\pi\sqrt{n})^{m}} \int_{\R^m}\int_{O_{m,1}\times O_{m,2}}  \Big(\prod^{\frac{m}{2}}_{j=1}| \widehat{f}(y_{2j})|^2\Big)\,e^{-\sum\limits^{m/2}_{j=1} |y_{2j}| u_{2j}}\Big(\prod^{m/2}_{j=1} u^{-1}_{2j-1}\Big)\, du\, d\overline{y}.
\end{align*}
 By Lemmas \ref{a2} and \ref{a3},
\begin{align} \label{2nd1-2} \nonumber
\liminf_{n\to\infty} I^n_{m,m}
& \geq \frac{2^{\frac{m}{2}} m!}{(2\pi)^{m}} \Big(\int_{\R} |\widehat{f}(y)|^2|y|^{-1}\, dy\Big)^\frac{m}{2} t^{\frac{m}{2}}\\
&= \frac{m!\ t^{\frac{m}{2}}}{2^{\frac{m}{2}}} \Big(\frac{1}{\pi^2}\int_{\R} | \widehat{f}(y)|^2|y|^{-1}\, dy\Big)^{\frac{m}{2}}.
\end{align}

Combining \eref{2nd1-1} and \eref{2nd1-2} gives
\begin{align*}
\lim_{n\to\infty} I^n_{m,m}
&= \frac{m!\ t^{\frac{m}{2}}}{2^{\frac{m}{2}}} \Big(\frac{1}{\pi^2}\int_{\R} | \widehat{f}(y)|^2|y|^{-1}\, dy\Big)^{\frac{m}{2}}\\
&=\Big( \frac{1}{\pi^2}\int_{\R}|\widehat{f}(x)|^2|x|^{-1}\, dx\Big)^{-\frac{m}{2}}\, \E\Big(\sqrt{Z_t}\, \eta\Big)^{m},
\end{align*}
where in the last equality we used Lemma \ref{m}. So the statement \eref{2nd1-0} follows. Using the method of moments, the proof is completed.
\end{proof}

\bigskip

{\medskip \noindent \textbf{Proof of Theorem \ref{2nd}.}} Since $f$ is bounded, this follows easily from Proposition \ref{2nd1}.

\bigskip

\section{A remark on \cite{Rosen}}
In this section, we assume $\alpha>1$ and will give another expression for the constant in the limiting process in Theorem 1.1 of \cite{Rosen}.
\begin{theorem}
Suppose that $f$ is a bounded Borel function on $\R$ with compact support and $\int_{\R} f(x)\, dx=0$. Then
\[
  \bigg\{ n^{\frac{1-\alpha}{2\alpha}}\int_{0}^{nt} f(X(s))\, ds\,,
  \ t\ge 0\bigg\} \ \ \overset{\mathcal{L}}{\longrightarrow } \ \
  \bigg\{\Big(\frac{1}{\pi^2}\int_{\R}|\widehat{f}(x)|^2|x|^{-\alpha}\, dx\Big)^{-\frac{1}{2}}\, W (L_{t}(0)\,, t\ge
  0\bigg\}
\]
as $n$ tends to infinity, where $W$ is a real-valued Brownian motion independent of $X$.
\end{theorem}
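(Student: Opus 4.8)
The statement is exactly Theorem~1.1 of \cite{Rosen} with its multiplicative constant rewritten, so the plan is to invoke that theorem and then verify the constant identity; concretely, since \cite{Rosen} already furnishes the convergence in $C([0,\infty))$ toward $\sqrt{2c\,\langle f,f\rangle_{\alpha-1}}\,W(L_t(0))$, it suffices to show that $\sqrt{2c\,\langle f,f\rangle_{\alpha-1}}$ coincides with the prefactor in the statement, i.e.\ to compute $2c\,\langle f,f\rangle_{\alpha-1}$ as a multiple of $\int_{\R}|\widehat f(x)|^2|x|^{-\alpha}\,dx$. This splits into two pieces --- a Fourier rewriting of the bilinear form $\langle\cdot,\cdot\rangle_{\alpha-1}$ and a closed-form evaluation of $c$ --- which are in fact coupled through one pointwise identity, and that identity is the natural way in.

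The identity I would establish first is
\[
\int_0^\infty\big(p_t(0)-p_t(x)\big)\,dt=c\,|x|^{\alpha-1},\qquad x\in\R ,
\]
with $c$ as in \cite{Rosen}. This follows from Fourier inversion: $p_t(0)-p_t(x)=\frac1{2\pi}\int_{\R}e^{-t|\xi|^{\alpha}}\big(1-\cos(\xi x)\big)\,d\xi$; integrating in $t$ (Tonelli, the integrand being nonnegative) and using $\int_0^\infty e^{-t|\xi|^{\alpha}}\,dt=|\xi|^{-\alpha}$ gives $\frac1{2\pi}\int_{\R}|\xi|^{-\alpha}\big(1-\cos(\xi x)\big)\,d\xi$; the scaling $\xi\mapsto\xi/|x|$ and the elementary formula $\int_0^\infty u^{-\alpha}(1-\cos u)\,du=-\Gamma(1-\alpha)\cos\frac{\pi(1-\alpha)}{2}=-\Gamma(1-\alpha)\sin\frac{\pi\alpha}{2}$ (valid for $1<\alpha<2$) then yield $c\,|x|^{\alpha-1}$ with $c=-\frac1\pi\,\Gamma(1-\alpha)\sin\frac{\pi\alpha}{2}$. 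A change of variables $s=x^{-\alpha}$ in $c=\int_0^\infty\big(p_1(0)-p_1(s^{-1/\alpha})\big)s^{-1/\alpha}\,ds$ (plus Tonelli and the same scaling) shows this is the same $c$; alternatively one may avoid the explicit constant here and carry $c$ symbolically.

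Next I would plug the pointwise identity into the bilinear form. Since $\int_{\R}f=0$, the $p_t(0)$ contribution vanishes, so
\[
\langle f,f\rangle_{\alpha-1}=-\int_{\R^2}f(x)f(y)|x-y|^{\alpha-1}\,dx\,dy=\frac1c\int_0^\infty\int_{\R^2}f(x)f(y)\,p_t(x-y)\,dx\,dy\,dt .
\]
Parseval's identity gives $\int_{\R^2}f(x)f(y)p_t(x-y)\,dx\,dy=\frac1{2\pi}\int_{\R}|\widehat f(\xi)|^2e^{-t|\xi|^{\alpha}}\,d\xi$, and integrating in $t$ once more produces $\langle f,f\rangle_{\alpha-1}=\frac1{2\pi c}\int_{\R}|\widehat f(\xi)|^2|\xi|^{-\alpha}\,d\xi$; hence $2c\,\langle f,f\rangle_{\alpha-1}$ is a universal multiple of $\int_{\R}|\widehat f(\xi)|^2|\xi|^{-\alpha}\,d\xi$, and substituting the closed form of $c$ (or, equivalently, simplifying with the reflection formula $\Gamma(\alpha)\Gamma(1-\alpha)=\pi/\sin\pi\alpha$ and $2\sin\frac{\pi\alpha}{2}\cos\frac{\pi\alpha}{2}=\sin\pi\alpha$ if one instead goes through the distributional transform $\widehat{|x|^{\alpha-1}}(\xi)=-2\Gamma(\alpha)\sin\frac{\pi(\alpha-1)}{2}|\xi|^{-\alpha}$) gives the prefactor in the statement, up to the $2\pi$-bookkeeping that must be carried out carefully. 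Finiteness of $\int_{\R}|\widehat f(\xi)|^2|\xi|^{-\alpha}\,d\xi$ also needs a word: near $\xi=0$ one uses $\widehat f(0)=\int f=0$ and smoothness of $\widehat f$ (Rosen's $f$ is bounded with compact support), so $|\widehat f(\xi)|^2=O(\xi^2)$ and the integrand is $O(|\xi|^{2-\alpha})$, integrable since $\alpha<2$, while decay of $\widehat f$ handles $|\xi|\to\infty$; plugging back into Rosen's theorem then finishes the proof.

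The only genuinely delicate points are the interchanges of integration (all handled by Tonelli since the integrands are nonnegative) and the bookkeeping of $2\pi$-factors in the final constant, which is easy to get slightly wrong; I would therefore double-check it by carrying out the computation both ways, via the pointwise identity and via $\widehat{|x|^{\alpha-1}}$. A self-contained alternative, not using \cite{Rosen} at all, would be to rerun the method-of-moments and chaining scheme of Section~4 directly in the regime $\alpha>1$, where the local time $L_t(0)$ (which exists and is jointly continuous for $\alpha>1$) replaces the exponential variable $Z(t)$ of Section~4 in the limit; that route would require the joint moment asymptotics of $\int_0^{nt}f(X(s))\,ds$ together with $L_t(0)$ and is considerably longer than the constant identification a genuine "remark" calls for.
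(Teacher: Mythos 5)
Your proposal is correct, and it takes a genuinely different route from the paper's. Both arguments rest entirely on Rosen's Theorem 1.1 for the functional convergence, so the whole content is the identification of the constant; the paper does this probabilistically, by computing $\lim_{n\to\infty}\E(F_n)^2$ directly via the Fourier transform, a change of variables and L'H\^opital's rule, and then matching the answer against $\E(L_t(0))$ from Lemma \ref{m1}, whereas you do it analytically, by proving the pointwise identity $\int_0^\infty\big(p_t(0)-p_t(x)\big)\,dt=c\,|x|^{\alpha-1}$ and pushing $\langle f,f\rangle_{\alpha-1}$ through Parseval, so that $c$ cancels and $2c\,\langle f,f\rangle_{\alpha-1}=\frac1\pi\int_\R|\widehat f(\xi)|^2|\xi|^{-\alpha}\,d\xi$ (your explicit evaluation of $c$ is therefore dispensable, as you note). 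Your route avoids the step, implicit and not justified in the paper, that convergence in law of $F_n$ entails convergence of its second moments --- that would require uniform integrability of $F_n^2$; the paper's route avoids having to unwind the structure of Rosen's constant at all. One caveat on bookkeeping: in the step where you replace $|x-y|^{\alpha-1}$ by the time-integral of the transition densities, the integrand $f(x)f(y)\big(p_t(0)-p_t(x-y)\big)$ is signed, so you need Fubini rather than Tonelli, but absolute integrability follows from $0\le p_t(0)-p_t(z)$ and the finiteness of $\int|f(x)f(y)|\,|x-y|^{\alpha-1}\,dx\,dy$ for bounded compactly supported $f$. Finally, note that both your computation and the paper's own last display produce the variance coefficient $\frac1\pi\int_\R|\widehat f(\xi)|^2|\xi|^{-\alpha}\,d\xi$, which does not agree with the constant $\big(\frac{1}{\pi^2}\int_\R|\widehat f(x)|^2|x|^{-\alpha}\,dx\big)^{-1/2}$ as literally printed in the theorem (the exponent $-\tfrac12$ and the power of $\pi$ appear to be typos); your calculation thus serves as an independent check confirming the corrected value $\big(\frac1\pi\int_\R|\widehat f(x)|^2|x|^{-\alpha}\,dx\big)^{1/2}$.
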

\begin{proof} Let 
\[
F_n= n^{\frac{1-\alpha}{2\alpha}}\int_{0}^{nt} f(X(s))\, ds.
\]
Then, by Theorem 1.1 in \cite{Rosen}, it suffices to show
\[
\lim_{n\to\infty}\E(F_n)^2=\Big(\frac{1}{\pi^2}\int_{\R}|\widehat{f}(x)|^2|x|^{-\alpha}\, dx\Big)\, \E(L_t(0)).
\] 
Using the Fourier transform,
\begin{align*}
\E(F_n)^2&=\frac{1}{2\pi^2}\, n^{\frac{1-\alpha}{\alpha}}\int_{0}^{nt}\int^{s_2}_0\int_{\R^2}\widehat{f}(x_1)\widehat{f}(x_2)\, e^{-|x_2|^{\alpha}(s_2-s_1)-|x_2+x_1|^{\alpha}s_1 }\, dx\, ds.
\end{align*}
Making the change of variables $y_2=x_2$ and $y_1=x_2+x_1$,
\begin{align*}
\E(F_n)^2&=\frac{1}{2\pi^2}\, n^{\frac{1-\alpha}{\alpha}}\int_{0}^{nt}\int^{s_2}_0\int_{\R^2}\widehat{f}(y_1-y_2)\widehat{f}(y_2)\, e^{-|y_2|^{\alpha}(s_2-s_1)-|y_1|^{\alpha}s_1 }\, dy\, ds.
\end{align*}
Let 
\[
I^2_2=\frac{1}{2\pi^2}\, n^{\frac{1-\alpha}{\alpha}}\int_{0}^{nt}\int^{s_2}_0\int_{\R^2}|\widehat{f}(y_2)|^2\, e^{-|y_2|^{\alpha}(s_2-s_1)-|y_1|^{\alpha}s_1 }\, dy\, ds.
\] 

For all $x,y\in\R$, 
\[
|\widehat{f}(x)-\widehat{f}(y)|\leq c_{\beta}|x-y|^{\beta},
\]  
where $\beta$ can be any constant in $[0,1]$. Thus,
\begin{align*}
\big|\E(F_n)^2-I^2_2\big|
&\leq c_1\, n^{\frac{1-\alpha}{\alpha}}\int_{0}^{nt}\int^{s_2}_0\int_{\R^2}|y_1|^{\beta}|\widehat{f}(y_2)|\, e^{-|y_2|^{\alpha}(s_2-s_1)-|y_1|^{\alpha}s_1 }\, dy\, ds.
\end{align*}
Making the change of variables $u_2=s_2-s_1$ and $u_1=s_2$,
\begin{align*}
\big|\E(F_n)^2-I^2_2\big|
&\leq c_1\, n^{\frac{1-\alpha}{\alpha}}\int_{0}^{nt}\int^{nt}_0\int_{\R^2}|y_1|^{\beta}|\widehat{f}(y_2)|\, e^{-|y_2|^{\alpha}u_2-|y_1|^{\alpha}u_1 }\,dy\,du\\
&\leq c_1\, n^{\frac{1-\alpha}{\alpha}}\int_{0}^{nt}\int_{\R^2}|y_1|^{\beta}|\widehat{f}(y_2)| |y_2|^{-\alpha}\, e^{-|y_1|^{\alpha}u_1 }\, dy\, du_1\\
&\leq c_2\, n^{\frac{1-\alpha}{\alpha}}\int_{0}^{nt}\int_{\R}|y_1|^{\beta}\, e^{-|y_1|^{\alpha}u_1 }\, dy_1\, du_1\\
&\leq c_3\, n^{\frac{1-\alpha}{\alpha}}\int_{0}^{nt}|u_1|^{-\frac{1+\beta}{\alpha}}\, du_1\\
&\leq c_4\, n^{-\frac{\beta}{\alpha}},
\end{align*}
where $\beta$ can be any constant such that $0<\beta<\alpha-1$. Using the above estimate, 
\[
\lim_{n\to\infty} \E(F_n)^2=\lim_{n\to\infty} I^2_2. 
\]
So we only need to show 
\[
\lim\limits_{n\to\infty} I^2_2=\Big(\frac{1}{\pi^2}\int_{\R}|\widehat{f}(x)|^2|x|^{-\alpha}\, dx\Big)\, \E(L_t(0)).
\] 

By the L'H\^opital rule,
\begin{align*}
\lim_{n\to\infty} I^2_2
&=\frac{\alpha t}{2\pi^2(\alpha-1)}\lim_{n\to\infty}\, n^{\frac{1}{\alpha}}\int_{0}^{nt}\int_{\R^2}|\widehat{f}(y_2)|^2\, e^{-|y_2|^{\alpha}(nt-s_1)-|y_1|^{\alpha}s_1 }\, dy\, ds_1\\
&=\frac{\alpha t}{2\pi^2(\alpha-1)}\Big(\int_{\R}e^{-|y|^{\alpha}}\, dy\Big)\lim_{n\to\infty}\, n^{\frac{1}{\alpha}}\int_{0}^{nt}\int_{\R}|\widehat{f}(y)|^2\, s^{-\frac{1}{\alpha}}e^{-|y|^{\alpha}(nt-s)}\, dy\, ds.
\end{align*}

For any $\epsilon\in(0,1)$, we obtain
\begin{align*}
&\lim_{n\to\infty}\, n^{\frac{1}{\alpha}}\int_{0}^{(1-\epsilon)nt}\int_{\R}|\widehat{f}(y)|^2\, s^{-\frac{1}{\alpha}}e^{-|y|^{\alpha}(nt-s)}\, dy\, ds\\
&\leq\lim_{n\to\infty}\, n^{\frac{1}{\alpha}}\int_{0}^{(1-\epsilon)nt}\int_{\R}|\widehat{f}(y)|^2\, s^{-\frac{1}{\alpha}}e^{-|y|^{\alpha}\epsilon nt}\, dy\, ds\\
&\leq c_5\, \lim_{n\to\infty}\, n \int_{\R}|y|^2\, e^{-|y|^{\alpha}\epsilon nt}\, dy\\
&\leq c_6\, \lim_{n\to\infty}\, n^{1-\frac{3}{\alpha}}\\
&=0,
\end{align*}
where in the second inequality we used $|\widehat{f}(y)|\leq c_7|y|$ for all $y\in\R$.

Making the change of variable $u=nt-s$,
\begin{align*}
&\lim_{n\to\infty}\, n^{\frac{1}{\alpha}}\int^{nt}_{(1-\epsilon)nt}\int_{\R}|\widehat{f}(y)|^2\, s^{-\frac{1}{\alpha}}\, e^{-|y|^{\alpha}(nt-s)}\, dy\, ds\\
&\qquad\qquad=\lim_{n\to\infty}\, \int_{0}^{\epsilon nt}\int_{\R}|\widehat{f}(y)|^2\, (t-\frac{u}{n})^{-\frac{1}{\alpha}}\, e^{-|y|^{\alpha} u}\, dy\, du.
\end{align*}
Note that
\[
t^{-\frac{1}{\alpha}}\leq (t-\frac{u}{n})^{-\frac{1}{\alpha}}\leq (1-\epsilon)^{-\frac{1}{\alpha}}\, t^{-\frac{1}{\alpha}}.
\]
This gives
\begin{align*}
& t^{-\frac{1}{\alpha}}\, \int_{\R} |\widehat{f}(y)|^2|y|^{-\alpha}\, dy\\
&\qquad\leq \lim_{n\to\infty}\, n^{\frac{1}{\alpha}}\int^{nt}_{(1-\epsilon)nt}\int_{\R}|\widehat{f}(y)|^2\, s^{-\frac{1}{\alpha}}\, e^{-|y|^{\alpha}(nt-s)}\, dy\, ds\\
&\qquad\qquad \leq (1-\epsilon)^{-\frac{1}{\alpha}}\,  t^{-\frac{1}{\alpha}}\, \int_{\R} |\widehat{f}(y)|^2|y|^{-\alpha}\, dy.
\end{align*}

Using these estimates and the fact that $\epsilon>0$ is arbitrary, we obtain
\begin{align*}
\lim\limits_{n\to\infty} I^2_2
&=\frac{\alpha }{2\pi^2(\alpha-1)}\Big(\int_{\R}e^{-|y|^{\alpha}}\, dy\Big)\, t^{1-\frac{1}{\alpha}}\, \int_{\R} |\widehat{f}(y)|^2|y|^{-\alpha}\, dy\\
&=\Big(\frac{1}{\pi}\int_{\R} |\widehat{f}(y)|^2|y|^{-\alpha}\, dy\Big)\, \E(L_t(0)),
\end{align*}
where in the last equality we used Lemma \ref{m1}. This completes the proof.
\end{proof}

\bigskip

\section{Appendix}

Here we give some lemmas which are necessary for the proof of Theorems \ref{1st} and \ref{2nd}. 
\begin{lemma} \label{a1} For any $m\in \N$
\[
\lim_{n\to\infty}\, \frac{1}{n^m}\int_{[-1,1]^m} \int_{D_m}\, e^{-\sum\limits^m_{i=1} |\sum\limits^{m}_{j=i} x_j|(s_i-s_{i-1})}\, ds\, dx=(2t)^m,
\]
where $D_m=\big\{1<s_1<\cdots<s_m<e^{nt}\big\}$.
\end{lemma}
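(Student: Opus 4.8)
The plan is to compute the inner $s$-integral explicitly after changing variables, and then to show that only one term in the resulting expansion contributes to the limit. First I would make the change of variables $u_1 = s_1$, $u_i = s_i - s_{i-1}$ for $2 \le i \le m$. Under this map the simplex $D_m$ becomes the region $O_m = \{u_1 > 1,\ \sum_{i=1}^m u_i < e^{nt}\}$ (with the remaining $u_i$ positive), and the exponent becomes $\sum_{i=1}^m |z_i| u_i$ where $z_i = \sum_{j=i}^m x_j$. Since the integrand is then a product over the $u_i$'s, I would enlarge the domain to the cube $[0, e^{nt}]^m$ for the upper bound and shrink it to a slightly smaller cube (e.g. $[1, e^{nt}/m] \times [0, e^{nt}/m]^{m-1}$, using $O_m \supset$ such a product) for the lower bound; the boundary constraint $\sum u_i < e^{nt}$ only costs a constant factor in the exponent of $e^{nt}$, hence is absorbed after dividing by $n^m$ and taking $n \to \infty$. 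So it suffices to evaluate
\[
\lim_{n\to\infty} \frac{1}{n^m} \int_{[-1,1]^m} \prod_{i=1}^m \Big( \int_0^{e^{nt}} e^{-|z_i| u_i}\, du_i \Big)\, dx = \lim_{n\to\infty} \frac{1}{n^m} \int_{[-1,1]^m} \prod_{i=1}^m \frac{1 - e^{-|z_i| e^{nt}}}{|z_i|}\, dx,
\]
and similarly with the truncated cube.

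Next I would pass from the $x$-variables to the $z$-variables via $z_i = \sum_{j=i}^m x_j$ (equivalently $x_i = z_i - z_{i+1}$ with $z_{m+1}=0$), a unimodular linear change; the cube $[-1,1]^m$ in $x$ maps onto a region $R$ contained in $[-m,m]^m$ and containing a fixed neighbourhood of the origin. The integral is then $\int_R \prod_{i=1}^m \frac{1-e^{-|z_i|e^{nt}}}{|z_i|}\, dz$. Here the key observation is that each one-dimensional factor $g_n(z) := \int_{-a}^a \frac{1-e^{-|z|e^{nt}}}{|z|}\, dz$ over a fixed interval satisfies $g_n(z) \sim 2nt$ as $n \to \infty$: indeed $\int_{-a}^a \frac{1-e^{-|z|e^{nt}}}{|z|}\, dz = 2\int_0^{a e^{nt}} \frac{1-e^{-v}}{v}\, dv$, and $\int_0^N \frac{1-e^{-v}}{v}\, dv = \log N + O(1) = nt + \log a + O(1)$, exactly the estimate already used in the proof of Proposition \ref{1st1}. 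Since the region $R$ is, up to sets whose contribution is lower order, a product of such intervals (and since the integrand is nonnegative and monotone in the domain), sandwiching $R$ between an inner and outer box gives
\[
\int_R \prod_{i=1}^m \frac{1-e^{-|z_i|e^{nt}}}{|z_i|}\, dz = (2nt)^m \big(1 + o(1)\big) = (2t)^m\, n^m + o(n^m).
\]
Dividing by $n^m$ yields the claimed limit $(2t)^m$.

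The main obstacle is making rigorous the claim that the domain $R$ (the image of the cube under the $z$-change of variables) can be replaced by a product of fixed intervals without affecting the leading $n^m$ term. The cleanest way is to note that the integrand $\prod_i \frac{1-e^{-|z_i|e^{nt}}}{|z_i|}$ is bounded above by $\prod_i e^{nt}$ pointwise but, more usefully, that $\int_{|z_i| \le a} \frac{1-e^{-|z_i|e^{nt}}}{|z_i|}\, dz_i \le 2nt + c$ for a universal constant $c$, so the outer box $R \subset [-m,m]^m$ gives the upper bound $(2nt+c)^m$; for the lower bound one uses that $R$ contains $[-\delta,\delta]^m$ for some fixed $\delta > 0$, whence $\int_R \ge (2\log(\delta e^{nt}))^m (1+o(1)) = (2nt)^m(1+o(1))$. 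Combining with the harmless-boundary remark above (handled exactly as in the tightness estimate of Proposition \ref{1st1}) completes the argument; no delicate cancellation is involved since all integrands are nonnegative.
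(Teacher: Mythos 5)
Your argument is correct and follows essentially the same route as the paper: the unimodular change of variables $y_i=\sum_{j\ge i}x_j$, sandwiching the image of the cube between an inner and an outer box, reducing to the one-dimensional asymptotics $\int_0^N \frac{1-e^{-v}}{v}\,dv=\log N+O(1)$ (so that the box size only affects lower-order terms), and discarding the simplex constraint $\sum_i u_i<e^{nt}$ and the condition $u_1>1$ as negligible. The only cosmetic difference is the order of integration --- you integrate out the $u$-variables first and sandwich in the $z$-domain, whereas the paper integrates out the $y$-variables first and then compares the simplex with the cube $[0,e^{nt}]^m$ --- which changes nothing of substance.
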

\begin{proof} Making the change of variables $y_i=\sum\limits^{m}_{j=i} x_j$ for $i=1,2,\dots,m$ gives
\begin{align*}
\frac{1}{n^m}\int_{[-1,1]^m} \int_{D_m}\, e^{-\sum\limits^m_{i=1} |\sum\limits^{m}_{j=i} x_j|(s_i-s_{i-1})}\, ds\, dx
&\leq \frac{1}{n^m}\int_{[-m,m]^m} \int_{D_m}\, e^{-\sum\limits^m_{i=1} |y_i|(s_i-s_{i-1})}\, ds\, dy\\
&= \frac{2^m}{n^m}\int_{D_m}\,   \left(    \prod_{i=1}^m\frac{1-e^{-m(s_i-s_{i-1})}}{s_i-s_{i-1}} \right)\, ds
\end{align*}
and
\begin{align*}
\frac{1}{n^m}\int_{[-1,1]^m} \int_{D_m}\, e^{-\sum\limits^m_{i=1} |\sum\limits^{m}_{j=i} x_j|(s_i-s_{i-1})}\, ds\, dx
&\geq \frac{1}{n^m}\int_{[-\frac{1}{m},\frac{1}{m}]^m} \int_{D_m}\, e^{-\sum\limits^m_{i=1} |y_i|(s_i-s_{i-1})}\, ds\, dy\\
&= \frac{2^m}{n^m}\int_{D_m}\, \bigg(\prod^m_{i=1}\frac{1-e^{-\frac{1}{m}(s_i-s_{i-1})}}{s_i-s_{i-1}}\bigg)\, ds.
\end{align*}
So it suffices to show that, for any $b>0$,
\[
\lim_{n\to\infty} \frac{1}{n^m}\int_{O_{m,3}}\, \Big(\prod^m_{i=1}\, \frac{1-e^{-bu_i}}{u_i}\Big)\, du=t^m,
\]
where $O_{m,3}=\big\{(u_1,u_2,\dots, u_m):\, \sum\limits^m_{i=1} u_i<e^{nt},\, u_1\geq 1,\, u_i> 0,\, i=2,3,\dots, m \big\}$.

Let $O_{m,4}=\big\{(u_1,u_2,\dots, u_m):\,  \sum\limits^m_{i=1} u_i<e^{nt},\, u_i>0,\, i=1,2,\dots , m \big\}$. Then 
\begin{align*}
\int_{O_{m,4}-O_{m,3}}\, \Big(\prod\limits^m_{i=1}\frac{1- e^{-bu_i}}{u_i}\Big)\, du
&\leq \int_{O_{m,5}}\, \Big(\prod\limits^m_{i=1}\frac{1- e^{-bu_i}}{u_i}\Big)\, du,
\end{align*}
where $O_{m,5}=\big\{(u_1,u_2,\dots, u_m):\, 0<u_1\leq 1,\,  0<u_i<e^{nt},\, i=2,\dots ,m \big\}$. 

Note that $h(x)=\frac{1- e^{-bx}}{x}$ is a continuous function on $(0,\infty)$ with $\lim\limits_{x\to 0^+}h(x)=b$ and $\lim\limits _{x\rightarrow \infty} h(x)=0$.  Using the L'H\^opital rule,
\[
\lim_{n\to\infty} \frac{1}{n}\int^{e^{nt}}_0\, h(x)\, dx=t.
\]
This shows
\begin{align*}
\int_{O_{m,4}-O_{m,3}}\, \Big(\prod\limits^m_{i=1}\frac{1- e^{-bu_i}}{u_i}\Big)\, du
&\leq \bigg(\int^1_0\frac{1- e^{-bu}}{u}\, du\bigg) \bigg(\int^{e^{nt}}_0\frac{1- e^{-bu}}{u}\, du\bigg)^{m-1}\leq c_1\, n^{m-1},
\end{align*}
which implies
\[
\lim\limits_{n\to\infty}  \frac{1}{n^m}\int_{O_{m,3}}\, \Big(\prod\limits^m_{i=1}\frac{1- e^{-bu_i}}{u_i}\Big)\, du
=\lim\limits_{n\to\infty}  \frac{1}{n^m}\int_{O_m,4}\, \Big(\prod\limits^m_{i=1}\frac{1- e^{-bu_i}}{u_i}\Big)\, du.
\]
Moreover, 
\begin{align*}
\int_{[0,e^{nt}]^m-O_{m,3}}\, \Big(\prod\limits^m_{i=1}\frac{1- e^{-bu_i}}{u_i}\Big)\, du
&\leq \sum^m_{j=1}\int_{[0,e^{nt}]^m\cap \{u_j>\frac{e^{nt}}{m}\}}\, \Big(\prod\limits^m_{i=1}\frac{1- e^{-bu_i}}{u_i}\Big)\, du\\
&\leq m\, \Big(\int^{e^{nt}}_{\frac{e^{nt}}{m}}\frac{1- e^{-bu}}{u}\, du\Big)\, \Big(\int^{e^{nt}}_0\frac{1- e^{-bu}}{u}\, du\Big)^{m-1}\\
&\leq c_2\, n^{m-1}.
\end{align*}
Therefore,
\begin{align*}
\lim\limits_{n\to\infty}  \frac{1}{n^m}\int_{O_{m,3}}\, \Big(\prod\limits^m_{i=1}\frac{1- e^{-bu_i}}{u_i}\Big)\, du
&=\lim\limits_{n\to\infty}  \frac{1}{n^m}\int_{[0,e^{nt}]^m}\, \Big(\prod\limits^m_{i=1}\frac{1- e^{-bu_i}}{u_i}\Big)\, du\\
&=\lim\limits_{n\to\infty}  \Big(\frac{1}{n}\int^{e^{nt}}_0\, \frac{1- e^{-bu}}{u}\, du\Big)^m\\
&=t^m.
\end{align*}
Combining the above arguments gives the desired result.
\end{proof}

\begin{lemma} \label{a2} For any $m\in \N$,
\begin{align*}
\lim_{n\to\infty} \int_{\R^m}\int_{O_m}  \Big(\prod^{m}_{i=1}| \widehat{f}(y_i)|^2\Big)\, e^{-\sum\limits^{m}_{i=1} |y_i| u_i}\,du\, dy
&=\Big(\int_{\R} |\widehat{f}(y)|^2|y|^{-1}\, dy\Big)^m,
\end{align*}
where $O_m=\big\{(u_1,\dots, u_m): \, u_i>n^{-m},\, i=1\dots,m,\, \sum\limits^{m}_{i=1} u_i<e^{nt}/2\big\}$.
\end{lemma}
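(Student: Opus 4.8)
The plan is to sandwich the integral between two products of one–dimensional integrals and pass to the limit in each factor by dominated convergence. Since the integrand $\prod_{i=1}^m|\widehat f(y_i)|^2 e^{-|y_i|u_i}$ is nonnegative, Tonelli's theorem applies throughout, and whenever the $u$–domain is a product set $\prod_{i=1}^m A_i$ the whole integral factors as $\prod_{i=1}^m\int_{\R}|\widehat f(y)|^2\bigl(\int_{A_i}e^{-|y|u}\,du\bigr)\,dy$. I will also use the finiteness of $\int_{\R}|\widehat f(y)|^2|y|^{-1}\,dy$: under the hypotheses of Theorem \ref{2nd}, $\int_{\R}|xf(x)|\,dx<\infty$ makes $\widehat f$ Lipschitz and $\int_{\R}f=0$ gives $\widehat f(0)=0$, so $|\widehat f(y)|\le c|y|$ near the origin and the integrand is bounded by $c^2|y|$ there; away from the origin $f\in L^1\cap L^\infty\subset L^2$, hence $\widehat f\in L^2$ and $\int_{|y|\ge1}|\widehat f(y)|^2|y|^{-1}\,dy\le\|\widehat f\|_2^2<\infty$.

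For the upper bound I would note that membership in $O_m$ forces $u_i>n^{-m}$ for every $i$, hence $O_m\subseteq\prod_{i=1}^m(n^{-m},\infty)$, and therefore
\[
\int_{\R^m}\int_{O_m}\Bigl(\prod_{i=1}^m|\widehat f(y_i)|^2\Bigr)e^{-\sum_{i=1}^m|y_i|u_i}\,du\,dy\ \le\ \prod_{i=1}^m\int_{\R}|\widehat f(y)|^2\,\frac{e^{-|y|n^{-m}}}{|y|}\,dy .
\]
Each factor is dominated by $\int_{\R}|\widehat f(y)|^2|y|^{-1}\,dy$ and, since $n^{-m}\to0$, converges to it by dominated convergence; thus $\limsup_{n\to\infty}$ of the left-hand side is at most $\bigl(\int_{\R}|\widehat f(y)|^2|y|^{-1}\,dy\bigr)^m$. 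For the lower bound I would use that $u_i<e^{nt}/(2m)$ for all $i$ implies $\sum_{i=1}^m u_i<e^{nt}/2$, so $\prod_{i=1}^m(n^{-m},e^{nt}/(2m))\subseteq O_m$ and
\[
\int_{\R^m}\int_{O_m}\Bigl(\prod_{i=1}^m|\widehat f(y_i)|^2\Bigr)e^{-\sum_{i=1}^m|y_i|u_i}\,du\,dy\ \ge\ \prod_{i=1}^m\int_{\R}|\widehat f(y)|^2\,\frac{e^{-|y|n^{-m}}-e^{-|y|e^{nt}/(2m)}}{|y|}\,dy .
\]
As $n\to\infty$ the integrand of each factor converges pointwise to $|\widehat f(y)|^2|y|^{-1}$ and is dominated by it, so each factor tends to $\int_{\R}|\widehat f(y)|^2|y|^{-1}\,dy$; hence $\liminf_{n\to\infty}$ of the left-hand side is at least $\bigl(\int_{\R}|\widehat f(y)|^2|y|^{-1}\,dy\bigr)^m$. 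Combining the two bounds gives the asserted limit.

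The only point requiring genuine attention is the finiteness of $\int_{\R}|\widehat f(y)|^2|y|^{-1}\,dy$, needed to invoke dominated convergence, and this is precisely where the cancellation hypothesis $\int_{\R}f=0$ and the moment hypothesis $\int_{\R}|xf|\,dx<\infty$ enter; once that is established, the rest is a routine product–domain sandwich, identical in spirit to (and simpler than) the estimates already carried out in Lemma \ref{a1}.
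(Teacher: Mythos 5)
Your proof is correct, and it takes a slightly different route from the paper's. The paper works with the box $[n^{-m},e^{nt}]^m\supseteq O_m$: it computes the integral over the box exactly (it factors into a product of one--dimensional integrals converging to $\int_{\R}|\widehat f(y)|^2|y|^{-1}\,dy$ by dominated convergence) and then shows the contribution of the complement $[n^{-m},e^{nt}]^m\setminus O_m$ vanishes via a union bound over the events $\{u_j>e^{nt}/m\}$, each term being controlled by $\int_{\R}|\widehat f(y)|^2|y|^{-1}\bigl(e^{-|y|e^{nt}/m}-e^{-|y|e^{nt}}\bigr)\,dy\to0$. You instead squeeze $O_m$ between the two product sets $\prod_i(n^{-m},e^{nt}/(2m))$ and $\prod_i(n^{-m},\infty)$ and pass to the limit in each one--dimensional factor; this sidesteps the union bound entirely and is, if anything, a little cleaner. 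Both arguments rest on the same three ingredients --- Tonelli factorization over product domains, one--dimensional dominated convergence, and the finiteness of $\int_{\R}|\widehat f(y)|^2|y|^{-1}\,dy$ --- and your write-up has the additional merit of justifying that finiteness explicitly (Lipschitz continuity of $\widehat f$ together with $\widehat f(0)=0$ near the origin, and $\widehat f\in L^2$ at infinity), a point the paper's proof uses but leaves tacit.
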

\begin{proof} It is easy to see 
\[
\Delta_{m}:=[n^{-m},e^{nt}]^m-O_m\subseteq  \cup^m_{j=1} \Delta_{m,j},
\]
where $\Delta_{m,j}=[n^{-m},e^{nt}]^m\cap \{u_j>\frac{e^{nt}}{m}\}$. Therefore,
\begin{align*}
&\int_{\R^m}\int_{\Delta_{m}}  \Big(\prod^{m}_{i=1}| \widehat{f}(y_i)|^2\Big)\, e^{-\sum\limits^{m}_{i=1} |y_i| u_i}\,du\, dy\\
&\leq  \sum^m_{j=1}\int_{\R^m}\int_{\Delta_{m,j}}  \Big(\prod^{m}_{i=1}| \widehat{f}(y_i)|^2\Big)\, e^{-\sum\limits^{m}_{i=1} |y_i| u_i }\,du\, dy\\
&\leq  c_1 \int_{\R} \int^{e^{nt}}_{\frac{e^{nt}}{m}} | \widehat{f}(y)|^2\, e^{-|y| u}\, du\, dy\\
& = c_1 \int_{\R} | \widehat{f}(y)|^2|y|^{-1}\, \big(e^{-|y|\frac{e^{nt}}{m}}-e^{-|y|e^{nt}}\big)\, du\, dy.
\end{align*}

By the dominated convergence theorem,
\begin{align} \label{a2-1}
\lim_{n\to\infty}\int_{\R^m}\int_{\Delta_m}  \Big(\prod^{m}_{i=1}| \widehat{f}(y_i)|^2\Big)\, e^{-\sum\limits^{m}_{i=1} |y_i| u_i}\,du\, dy=0.
\end{align}

On the other hand,
\begin{align}\label{a2-2} \nonumber
&\lim_{n\to\infty}\int_{\R^m}\int_{[n^{-m},e^{nt}]^m}  \Big(\prod^{m}_{i=1}| \widehat{f}(y_i)|^2\Big)\, e^{-\sum\limits^{m}_{i=1} |y_i| u_i}\,du\, dy\\ \nonumber
&=\lim_{n\to\infty}\Big(\int_{\R}\int^{e^{nt}}_{n^{-m}}  |\widehat{f}(y)|^2\, e^{-|y| u}\,du\, dy\Big)^m\\ 
&=\Big(\int_{\R} |\widehat{f}(y)|^2 |y|^{-1}\, dy\Big)^m
\end{align}

Our result follows easily from \eref{a2-1} and \eref{a2-2}.
\end{proof}

\begin{lemma} \label{a3} For any $m\in \N$
\[
\lim_{n\to\infty}\, \frac{1}{n^m}\int_{O_m}\Big(\prod^m_{i=1}u^{-1}_i\Big)\, du=t^m,
\]
where $O_{m}=\big\{(u_1, \dots, u_m): \, \sum\limits^m_{i=1}u_i<e^{nt},\, u_1\geq 1,\, u_i\geq n^{-m},\, i=2,\dots, m \big\}$.
\end{lemma}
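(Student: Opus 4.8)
The plan is to reduce this to a one-dimensional L'H\^opital computation exactly as in the proof of Lemma \ref{a1}. First I would observe that the constraints $u_1\ge 1$ and $u_i\ge n^{-m}$ differ from the ``free'' region $\{u_i>0,\ \sum u_i<e^{nt}\}$ only by pieces that are negligible after dividing by $n^m$. Concretely, set
\[
\wt O_m=\Big\{(u_1,\dots,u_m):\ \sum^m_{i=1}u_i<e^{nt},\ u_i>0,\ i=1,\dots,m\Big\}.
\]
The difference $\wt O_m\setminus O_m$ is contained in the union of $\{0<u_1\le 1\}$ and $\bigcup_{i\ge2}\{0<u_i\le n^{-m}\}$, each intersected with $[0,e^{nt}]^m$.

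Next I would estimate the integral of $\prod u_i^{-1}$ over each such exceptional slab. This is where a small subtlety arises, and I expect it to be the main obstacle: $\int_0^1 u^{-1}\,du$ diverges, so one cannot literally integrate $u_1^{-1}$ down to $0$. The fix is that on $O_m$ the genuine lower cutoffs ($u_1\ge1$, $u_i\ge n^{-m}$) are present, so one should instead bound the complementary region \emph{inside} $[n^{-m},e^{nt}]^m$: the slab $\{n^{-m}\le u_1\le 1\}$ contributes $\int_{n^{-m}}^1 u_1^{-1}du_1=m\log n$, while each slab $\{n^{-m}\le u_j\le n^{-m}\}$ for $j\ge 2$ is empty, so in fact the only discrepancy between $O_m$ and $[n^{-m},e^{nt}]^m\cap\{\sum u_i<e^{nt}\}$ is the $u_1$-slab, whose contribution is bounded by $(m\log n)\big(\int_{n^{-m}}^{e^{nt}}u^{-1}du\big)^{m-1}\le c\,n^{m-1}\log n=o(n^m)$. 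Similarly, replacing the simplex constraint $\sum u_i<e^{nt}$ by the cube $[n^{-m},e^{nt}]^m$ only adds, for each $j$, the region $\{u_j>e^{nt}/m\}$, contributing at most $m\big(\int_{e^{nt}/m}^{e^{nt}}u^{-1}du\big)\big(\int_{n^{-m}}^{e^{nt}}u^{-1}du\big)^{m-1}\le c\,n^{m-1}=o(n^m)$.

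Combining these, $\frac{1}{n^m}\int_{O_m}\prod u_i^{-1}\,du$ has the same limit as
\[
\frac{1}{n^m}\int_{[n^{-m},e^{nt}]^m}\prod^m_{i=1}u_i^{-1}\,du=\Big(\frac{1}{n}\int_{n^{-m}}^{e^{nt}}\frac{du}{u}\Big)^m=\Big(\frac{nt+m\log n}{n}\Big)^m\longrightarrow t^m,
\]
which is the claimed identity. The only real work is the bookkeeping in the previous paragraph — choosing to carve out the exceptional regions from $[n^{-m},e^{nt}]^m$ rather than from the unbounded orthant, so that every one-dimensional integral that appears is $O(\log n)$ or $O(1)$ and hence dwarfed by the $n^m$ in the denominator.
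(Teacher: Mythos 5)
Your proposal is correct and follows essentially the same route as the paper: both compare $O_m$ with the cube $[n^{-m},e^{nt}]^m$, show the discrepancy is $o(n^m)$ via one-dimensional logarithmic integrals, and evaluate $\bigl(\frac1n\int_{n^{-m}}^{e^{nt}}u^{-1}\,du\bigr)^m\to t^m$. In fact you are slightly more careful than the paper, whose displayed bound covers $[n^{-m},e^{nt}]^m\setminus O_m$ only by the sets $\{u_j>e^{nt}/m\}$ and silently omits the slab $\{n^{-m}\le u_1<1\}$ that you explicitly estimate by $O(n^{m-1}\log n)$.
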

\begin{proof} This result follows from
\begin{align*}
\lim_{n\to\infty}\frac{1}{n^m}\int_{[n^{-m},e^{nt}]^m-O_m}\Big(\prod^m_{i=1}u^{-1}_i\Big)\, du
&\leq \lim_{n\to\infty}\frac{1}{n^m} \sum^m_{j=1}\int_{[n^{-m},e^{nt}]^m\cap \{u_j>\frac{e^{nt}}{m}\}}\Big(\prod^m_{i=1}u^{-1}_i\Big)\, du\\
&\leq c_1\lim_{n\to\infty}\Big(\frac{1}{n}\int^{e^{nt}}_{n^{-m}} u^{-1} du\Big)^{m-1}\Big(\frac{1}{n}\int^{e^{nt}}_{\frac{e^{nt}}{m}} u^{-1} du\Big)\\
&=0
\end{align*}
and
\begin{align*}
\lim_{n\to\infty}\frac{1}{n^m}\int_{[n^{-m},e^{nt}]^m}\Big(\prod^m_{i=1}u^{-1}_i\Big)\, du
&=\lim_{n\to\infty}\Big(\frac{1}{n}\int^{e^{nt}}_{n^{-m}} u^{-1} du\Big)^{m}=t^m.
\end{align*}
\end{proof}

\bigskip

\end{document}